\newcommand{\address}[1]{\bigskip{\small\noindent #1 \par}}
\newcommand{\email}[1]{{\small\noindent\textit{Email address}: \texttt{#1} \par}}
\theoremstyle{plain}
\newtheorem{thm}{Theorem}[section]
\crefname{thm}{Theorem}{Theorem}
\newtheorem{cor}[thm]{Corollary}
\crefname{cor}{Corollary}{Corollary}
\newtheorem{lem}[thm]{Lemma}
\crefname{lem}{Lemma}{Lemma}
\newtheorem{prop}[thm]{Proposition}
\crefname{prop}{Proposition}{Proposition}
\theoremstyle{definition}
\newtheorem{dfn}[thm]{Definition}
\crefname{dfn}{Definition}{Definition}
\newtheorem{ex}[thm]{Example}
\crefname{ex}{Example}{Example}
\newtheorem{rmk}[thm]{Remark}
\crefname{rmk}{Remark}{Remark}
\crefname{figure}{Figure}{Figures}
\crefname{section}{Section}{Sections}
\newcommand{\Map}{\mathrm{Map}}
\newcommand{\RR}{\mathbb{R}}
\newcommand{\ZZ}{\mathbb{Z}}
\newcommand{\Top}{\mathrm{Top}}
\newcommand{\Grp}{\mathrm{Grp}}
\newcommand{\Aut}[1]{\mathrm{Aut}(#1)}
\newcommand{\Inn}[1]{\mathrm{Inn}(#1)}
\newcommand{\Fix}[1]{\mathrm{Fix}(#1)}
\newcommand{\Qdle}{\mathrm{Qdle}}
\newcommand{\Dis}[1]{\mathrm{Dis}(#1)}
\newcommand{\GAlex}{\mathrm{GAlex}}
\newcommand{\Core}{\mathrm{Core}}
\newcommand{\freeunion}{\sqcup^{\mathrm{free}}}
\title{On the Euler characteristics for quandles}
\author{Ryoya Kai and Hiroshi Tamaru}
\date{}
\begin{document}
\maketitle
\begin{abstract}
  A quandle is an algebraic system
  whose axioms generalize
  the algebraic structure of 
  the point symmetries of symmetric spaces.
  In this paper, 
  we give a definition of 
  Euler characteristics for quandles. 
  In particular,
  the quandle Euler characteristic 
  of a compact connected Riemannian 
  symmetric space coincides with
  the topological Euler characteristic.
  Additionally,
  we calculate the Euler characteristics 
  of some finite quandles,
  including generalized Alexander quandles,
  core quandles,
  discrete spheres, and discrete tori.
  Furthermore, we prove several properties of 
  quandle Euler characteristics,
  which suggest that they share similar properties with 
  topological Euler characteristics.
\end{abstract}
\section{Introduction}

The Euler characteristic for a topological space 
is one of the classical and important topological invariants.
In general, 
the Euler characteristics
can be defined for
topological spaces using Betti numbers or cell complexes.
Hopf and Samelson \cite{Hopf-1941-SatzüberWirkungsräumeGeschlossener}
developed a method for calculating the Euler characteristics 
of homogeneous spaces of compact Lie groups.
A \emph{symmetric space} is a space with a point symmetry at each point.
A compact connected symmetric space is a homogeneous space
under the action of the compact Lie group generated by these point symmetries.
In conclusion, 
the Euler characteristic of a compact connected Riemannian symmetric space
can be computed using the point symmetries.

A \emph{quandle} is an algebraic system 
introduced in knot theory \cite{Joyce-1982-ClassifyingInvariantKnotsKnot},
that consists of a pair of a non-empty set $X$ 
and a map $s: X \to \Map(X, X)$ satisfying certain axioms.
These axioms align well with 
the local moves of knot diagrams, known as Reidemeister moves.
Recently,
quandles have played important roles in many branches
of mathematics.
For example, 
symmetric spaces can be viewed as quandles
via their point symmetries.
In other words,
a quandle can be considered as a generalization 
of an algebraic system 
that focuses specially on point symmetries in a symmetric space.
In particular,
quandles can be regarded as a discretization 
of symmetric spaces,
and there have been several recent studies from this view point
(for instance, see \cite{Furuki-2024-HomogeneousQuandlesAbelianInner,Kubo-2022-CommutativityConditionSubsetsQuandles,Ishihara-2016-FlatConnectedFiniteQuandles}).

A quandle structure gives rise to certain canonical groups,
which act on the quandle in a manner compatible with the point symmetries
$s_x := s(x): X \to X$.
The following group,
which plays a key role in this study, 
is defined by Joyce \cite[\S 5]{Joyce-1982-ClassifyingInvariantKnotsKnot}
as the transvection group.
\begin{dfn}[{\cite{Joyce-1982-ClassifyingInvariantKnotsKnot}},
  \cite{Hulpke-2016-ConnectedQuandlesTransitiveGroups}]
	The group defined by
	\begin{equation*}
		\Dis{X} = \langle s_x \circ s_y^{-1} \mid x,y \in X \rangle_\Grp
	\end{equation*}
	is called the \emph{displacement group} of a quandle $X$.		
\end{dfn}
In this paper,
we define the \emph{quandle Euler characteristic}
using the action of the displacement group as follows:
\begin{dfn}
  Let $X$ be a quandle.
  Then the \emph{quandle Euler characteristic} $\chi^{\Qdle}(X)$ is defined by 
  \begin{equation*}
    \chi^{\Qdle}(X) := \inf\{\# \Fix{g,X} \mid g \in \Dis{X}\},
  \end{equation*}
  where $\Fix{g,X}$ denotes the set of fixed points of the action of $g$ on $X$.
\end{dfn}
By definition, the quandle Euler characteristic satisfies 
$\chi^\Qdle(X) \in \ZZ_{\geq 0}$,
which aligns with the property of the topological Euler characteristics for
compact homogeneous spaces.
Furthermore,
the following theorem 
explains why we refer to the number defined above 
as the quandle Euler characteristic.
\begin{thm}[\cref{SymSpEuler}]
	For a compact connected Riemannian symmetric space,
	the quandle Euler characteristic is equal to 
	the topological Euler characteristic.
\end{thm}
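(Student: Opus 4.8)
The plan is to realize $\Dis{M}$, for $M$ a compact connected Riemannian symmetric space, as the classical transvection group, and then to combine the Lefschetz fixed point theorem with the special geometry of isometries having isolated fixed points. First I would fix the identification of the algebra. The quandle structure on $M$ is given by the geodesic symmetries $s_x$, which are involutive isometries, so $s_y^{-1}=s_y$ and $s_x\circ s_y^{-1}=s_x\circ s_y$ is a transvection. Hence $\Dis{M}$ is exactly the group generated by transvections, that is, the \emph{transvection group} $G$: a compact connected Lie group acting transitively on $M$ by isometries, with $M\cong G/K$ for the isotropy subgroup $K$. The structural facts I rely on here are that $G$ is connected and compact (it is a closed connected subgroup of the compact group $\mathrm{Isom}(M)$) and that its elements are isometries.

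Second, the uniform lower bound. Since $G$ is connected, every $g\in G$ is joined to the identity by a path in $G$, so $g$ is homotopic to $\id_M$ and acts as the identity on $H^{*}(M;\mathbb{Q})$; thus its Lefschetz number is $L(g)=\chi(M)$. Now fix $g\in\Dis{M}$ and distinguish two cases for $\Fix{g,M}$. If $\Fix{g,M}$ is infinite, then trivially $\#\Fix{g,M}\ge\chi(M)$, the latter being a finite integer. If $\Fix{g,M}$ is finite, then since $g$ is an isometry its fixed set is a disjoint union of totally geodesic submanifolds, so finiteness forces each fixed point $p$ to be isolated, i.e. $\ker(dg_p-\id)=0$. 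Then $dg_p$ is orthogonal without eigenvalue $1$, and pairing complex conjugate eigenvalues gives $\det(\id-dg_p)>0$, so the Lefschetz index at $p$ equals $+1$. Summing, $\#\Fix{g,M}=L(g)=\chi(M)$. In either case $\#\Fix{g,M}\ge\chi(M)$, so $\chi^{\Qdle}(M)\ge\chi(M)$; in particular this shows $\chi(M)\ge0$.

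Third, the realization of the bound. I would take a maximal torus $T\subseteq G$ and a topological generator $g_0\in T$, so that $\Fix{g_0,M}=M^{T}$, the common fixed set of $T$. The key step is that $M^{T}$ is finite. At a point $gK\in M^{T}$ one has $g^{-1}Tg\subseteq K$, a maximal torus of $G$ sitting inside $K$; the isotropy action of this torus on $T_{gK}M\cong\mathfrak{g}/\mathfrak{k}$ has trivial zero weight space, because the zero weight space of a maximal torus in $\mathfrak{g}$ is its own Cartan subalgebra, which lies in $\mathfrak{k}$. Hence $T_{gK}M^{T}=0$, so $M^{T}$ is discrete and, being compact, finite. (Equivalently this is the rank computation behind the Hopf–Samelson formula $\chi(M)=|W_G|/|W_K|$.) Applying the previous paragraph to $g_0$ gives $\#M^{T}=\chi(M)$, so $\chi^{\Qdle}(M)\le\#\Fix{g_0,M}=\chi(M)$. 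Together with the lower bound this yields $\chi^{\Qdle}(M)=\chi(M)$.

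The hard part is really two structural inputs rather than a calculation: first, that $\Dis{M}$ is genuinely the compact connected transvection group acting transitively, so that $M=G/K$ and the Lefschetz and homotopy arguments apply; and second, the finiteness of $M^{T}$, which is what guarantees that the infimum is actually attained and equals $\chi(M)$ rather than some smaller value. The index computation for isometries is the mechanism that pins every finite fixed set to exactly $\chi(M)$; the only remaining care is the degenerate case $\chi(M)=0$, where a regular torus element simply has empty fixed set and the infimum is $0$.
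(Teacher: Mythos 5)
Your proof is correct, but it takes a genuinely different route from the paper's. The paper treats the Hopf--Samelson theorem as a black box: after establishing (via Loos and Myers) that $G := \Dis{X}$ is a compact connected Lie group acting transitively, it quotes the formula $\chi^\Top(X) = \#\Fix{T,X} = \#\Fix{g_0,X}$ for a maximal torus $T$ with generator $g_0$, which immediately gives $\chi^\Qdle(X) \le \chi^\Top(X)$; the reverse inequality then follows from the purely group-theoretic fact that every $g \in G$ is conjugate into $T$, so $\#\Fix{g,X} = \#\Fix{hgh^{-1},X} \ge \#\Fix{T,X}$. You instead re-derive the relevant case of Hopf--Samelson from scratch: your lower bound comes from the Lefschetz--Hopf index theorem (connectedness of $G$ forces $L(g) = \chi(M)$, and the orthogonality of $dg_p$ with no eigenvalue $1$ forces every isolated fixed point to have index $+1$), and your upper bound comes from a direct weight-space computation showing $M^T$ is discrete. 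What your approach buys is a stronger, self-contained dichotomy that the paper's citation-based argument does not make explicit: every $g \in \Dis{M}$ has either infinitely many fixed points or exactly $\chi(M)$ of them, so the infimum is attained by \emph{every} element with finite fixed set. What the paper's approach buys is brevity and generality --- the conjugation argument works verbatim for any homogeneous space of a compact connected Lie group, with no Riemannian input beyond compactness of the isometry group. One small caveat in both treatments: you assert (as does the paper, implicitly) that $\Dis{M}$ is compact because it sits inside the compact group $\mathrm{Isom}(M)$; strictly this requires closedness of $\Dis{M}$ in $\mathrm{Isom}(M)$, which you flag with the word ``closed'' but do not prove, so on this structural point you are at the same level of rigor as the paper.
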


In this paper, we prove several properties of 
quandle Euler characteristics,
which indicate that they 
share similarities with 
topological Euler characteristics.
In \cref{Example}, 
we determine the quandle Euler characteristics 
of some particular examples of quandles.

First examples we consider are generalized Alexander quandles.
A generalized Alexander quandle,
denoted by $\GAlex(G, \sigma)$, 
is defined as a group $G$ 
whose quandle structure is given by
a group automorphism $\sigma$ of $G$.
The class of generalized Alexander quandles
is important in quandle theory,
as any homogeneous quandle
can be realized as a quotient of some generalized Alexander quandle.
Note that any homogeneous manifold can be also
expressed as a quotient of a Lie group.
Therefore, 
in the theory of quandles,
generalized Alexander quandles can be viewed as 
counterparts of Lie groups.
The following theorem determines
the quandle Euler characteristics
of generalized Alexander quandles.
\begin{thm}[\cref{generalizedAlexander}]
	If $\sigma$ is a non-trivial group automorphism
  of a group $G$, 
	then the quandle Euler characteristic of 
  the generalized Alexander quandle $\GAlex(G, \sigma)$ is equal to $0$.
\end{thm}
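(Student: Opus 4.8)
The plan is to unwind the quandle structure of $\GAlex(G,\sigma)$, show that the displacement group acts on $G$ entirely by (left) translations, and then observe that nontriviality of $\sigma$ forces one of these translations to be fixed-point-free, which makes the infimum defining $\chi^\Qdle$ vanish.

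First I would write down the point symmetries explicitly. The quandle structure on $\GAlex(G,\sigma)$ is given by $s_x(y) = x\,\sigma(x^{-1}y)$ for $x,y \in G$; a direct computation gives the inverse $s_y^{-1}(w) = y\,\sigma^{-1}(y^{-1}w)$, and composing yields
\[
  (s_x \circ s_y^{-1})(w) = x\,\sigma(x^{-1}y)\,y^{-1}\,w .
\]
Thus each generator $s_x \circ s_y^{-1}$ of $\Dis{X}$ is the left translation $L_{a_{x,y}}\colon w \mapsto a_{x,y}\,w$ by the element $a_{x,y} := x\,\sigma(x^{-1}y)\,y^{-1} \in G$. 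Since a composite and an inverse of left translations is again a left translation, every $g \in \Dis{X}$ acts on $G$ as $L_h$ for some $h$ in the subgroup $\langle a_{x,y} \mid x,y \in G\rangle \leq G$. (The left/right side convention in the definition of $\GAlex$ is immaterial: the dual convention makes $\Dis{X}$ act by right translations, and the argument below is unchanged.)

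The key step is then the elementary remark that a nontrivial translation of a group has no fixed points: $L_h(w) = hw = w$ forces $h = e$, so $\#\Fix{L_h, X} = 0$ whenever $h \neq e$. It therefore suffices to exhibit one nontrivial translation inside $\Dis{X}$. Specializing the computation above to $x = e$ (the identity $e$ is a legitimate point of the quandle, since the underlying set of $\GAlex(G,\sigma)$ is all of $G$) gives $s_e \circ s_y^{-1} = L_{\sigma(y)y^{-1}}$. Because $\sigma \neq \id$, there is some $y \in G$ with $\sigma(y) \neq y$, i.e. $\sigma(y)y^{-1} \neq e$, so for this $y$ the generator $s_e \circ s_y^{-1} \in \Dis{X}$ is fixed-point-free.

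Putting these together, the set over which the infimum is taken contains the element $g = s_e \circ s_y^{-1}$ with $\#\Fix{g,X} = 0$; as the cardinalities are nonnegative, the infimum equals $0$, giving $\chi^\Qdle(\GAlex(G,\sigma)) = 0$. I do not expect a genuine obstacle here: the whole content is the structural fact that the displacement group acts by translations, so the only point requiring care is the bookkeeping verifying that $s_x \circ s_y^{-1}$ is honestly left translation by $a_{x,y}$ (in particular that $\sigma$ enters as a homomorphism, so that such maps compose to translations), after which the conclusion is immediate.
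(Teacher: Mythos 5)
Your proposal is correct and takes essentially the same approach as the paper: both compute $s_x \circ s_y^{-1}$ explicitly as a left translation and exhibit a fixed-point-free element of $\Dis{X}$ by choosing the pair so that the translation element is nontrivial (the paper uses $s_g \circ s_1^{-1} = L_{g\sigma(g)^{-1}}$, you use $s_1 \circ s_y^{-1} = L_{\sigma(y)y^{-1}}$, which is the same idea up to swapping the roles of the two points). Your additional structural remark that all of $\Dis{X}$ acts by translations is fine but not needed for the argument.
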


The second examples are core quandles,
which are groups with quandle structures
derived from the map taking the inverse element.
This provides a natural quandle structure for a group
from the viewpoint of symmetric spaces.
In fact,  
a symmetric space structure of a Lie group 
is usually given by the core quandle structure.
Similar to the case of generalized Alexander quandles,
we determine the quandle Euler characteristics of core quandles,
which are analogous to the topological Euler characteristics
of non-trivial compact connected Lie groups that are equal to $0$.
\begin{thm}[\cref{core}]
  The quandle Euler characteristic of 
  a non-trivial core quandle is equal to $0$.
\end{thm}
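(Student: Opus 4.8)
The plan is to exhibit, for any non-trivial core quandle $X = \Core(G)$, a single element $g \in \Dis{X}$ acting on $X = G$ without fixed points; since $\chi^{\Qdle}(X)$ is an infimum of nonnegative integers, the existence of such a $g$ forces $\chi^{\Qdle}(X) = 0$. First I would record the elementary computations for the core structure $s_x(y) = x y^{-1} x$. Each $s_x$ is an involution ($s_x^{-1} = s_x$), and a direct calculation gives the action of a generator of the displacement group,
\[
  (s_x \circ s_y^{-1})(z) = (x y^{-1})\, z\, (y^{-1} x).
\]
Hence every $g \in \Dis{X}$ acts as a map $z \mapsto c\, z\, d$ for some $c, d \in G$, and composition obeys the bookkeeping rule $(c_1, d_1)\cdot(c_2, d_2) = (c_1 c_2,\, d_2 d_1)$. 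Such a map fixes $z$ exactly when $z d z^{-1} = c^{-1}$, so it is fixed-point-free precisely when $c$ is not conjugate to $d^{-1}$; the whole problem thus reduces to realizing one such pair $(c,d)$ inside $\Dis{X}$.

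The naive attempt is to use a single generator, where $(c,d) = (x y^{-1}, y^{-1} x)$ and the fixed-point-free condition becomes ``$x y^{-1}$ is not conjugate to its inverse''. I expect this to be the main obstacle: in many groups (for instance $Q_8$, or symmetric groups) every element is conjugate to its inverse, so \emph{no} single generator is fixed-point-free and one is forced to pass to products. The remedy I would use is to manufacture a translation. Using the composition rule, the product $(s_{xy} \circ s_y^{-1})\circ(s_{x^{-1}} \circ s_e^{-1})$ collapses to the pair $(e,\, x^{-1} y^{-1} x y)$, that is, to the right translation $z \mapsto z\,(x^{-1} y^{-1} x y)$ by a commutator. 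A non-trivial translation is automatically fixed-point-free (if $z w = z$ then $w = e$), so whenever $G$ is non-abelian I would choose $x, y$ with $x^{-1} y^{-1} x y \neq e$ and take $g$ to be this product.

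It then remains only to treat abelian $G$, where all commutators vanish. Here non-triviality of $\Core(G)$ is exactly the statement that $G$ does not have exponent two, since $s_x = \id$ for all $x$ is equivalent to $x^2 = e$ for all $x$; so I may pick $a \in G$ with $a^2 \neq e$. The generator $s_a \circ s_e^{-1}$ then acts by $z \mapsto a z a = a^2 z$, a non-trivial translation, again fixed-point-free. In both cases $\Fix{g, X} = \emptyset$, whence $\chi^{\Qdle}(X) = 0$. The only genuinely delicate point is the one flagged above, namely that generators alone need not be fixed-point-free; the composition rule is precisely the tool that lets me sidestep it by building translations directly, and the mild case split (non-abelian versus abelian) exhausts all non-trivial core quandles.
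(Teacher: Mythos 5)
Your proof is correct and takes essentially the same approach as the paper's: in the non-abelian case both arguments compose two displacement generators to obtain a fixed-point-free right translation $z \mapsto z\,[x,y]$ by a non-trivial commutator, and in the abelian case both come down to the translation $z \mapsto a^2 z$ with $a^2 \neq e$ (the paper packages this step as the identification $\Core(G) \cong \GAlex(G, g \mapsto g^{-1})$ plus \cref{generalizedAlexander}, which unwinds to the same computation). Your $(c,d)$-pair bookkeeping is just a tidy way of organizing what the paper does by direct calculation.
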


The third examples consist of discrete subquandles 
within compact Riemannian symmetric spaces.
A typical example is 
the discrete $n$-sphere, denoted by $DS^n$, 
which is defined as 
a particular finite subquandle of the standard $n$-sphere $S^n$.
The following theorem suggests that 
the quandle Euler characteristics 
of certain finite objects
approximate the topological Euler characteristics 
of continuous objects.
\begin{thm}[\cref{DiscreteSphereEuler}]
	For any positive integer $n$, the quandle 
  Euler characteristic
  of the discrete $n$-sphere $DS^n$ satisfies
	$\chi^\Qdle(DS^n) = \chi^\Top(S^n)$, that is,
	\begin{equation*}
		\chi^{\Qdle}(DS^n) = 
		\begin{cases*}
			0 & if $n$ is odd,\\
			2 & if $n$ is even.
		\end{cases*}
	\end{equation*}
\end{thm}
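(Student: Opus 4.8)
The plan is to realize $DS^n$ concretely inside $S^n \subset \RR^{n+1}$ as the vertex set of the regular cross-polytope, namely $\{\pm e_1, \dots, \pm e_{n+1}\}$, where $e_1, \dots, e_{n+1}$ is the standard orthonormal basis. The point symmetry of $S^n$ at a unit vector $p$ is the geodesic symmetry $s_p(v) = 2\langle v, p\rangle p - v$, which preserves $S^n$, fixes $\pm p$, and acts as $-1$ on the orthogonal complement of $p$. First I would check that $\{\pm e_k\}$ is closed under every $s_{e_j}$, so that it is genuinely a subquandle, and record the two facts that drive the whole argument: each $s_{e_k}$ is the diagonal linear involution that is $+1$ in the $k$-th coordinate and $-1$ in all others, and $s_{-e_k} = s_{e_k}$. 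Consequently the only point symmetries appearing are $s_{e_1}, \dots, s_{e_{n+1}}$, each an involution, and $\Dis{DS^n}$ is generated by the products $s_{e_i} \circ s_{e_j}$.

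Next I would pin down $\Dis{DS^n}$ exactly. Encoding a diagonal sign matrix $\mathrm{diag}(\delta_1, \dots, \delta_{n+1})$ with $\delta_k \in \{\pm 1\}$ by the vector in $\mathbb{F}_2^{n+1}$ that records which entries equal $-1$, the involution $s_{e_k}$ corresponds to the all-ones vector minus the $k$-th basis vector, so that $s_{e_i} \circ s_{e_j}$ corresponds to the weight-two vector $e_i + e_j$. The subgroup of $\mathbb{F}_2^{n+1}$ generated by all such $e_i + e_j$ is precisely the even-weight subgroup, of order $2^n$, so $\Dis{DS^n} \cong (\ZZ/2)^n$ and consists exactly of the diagonal sign matrices with an even number of $-1$ entries. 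Since such a matrix acts faithfully on $\{\pm e_k\}$, fixed-point counting for elements of $\Dis{DS^n}$ reduces entirely to this combinatorial description.

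Finally I would carry out the optimization defining $\chi^\Qdle$. If $g \in \Dis{DS^n}$ has exactly $m$ diagonal entries equal to $+1$, then $g$ fixes $\pm e_k$ precisely when $\delta_k = +1$, so $\#\Fix{g, DS^n} = 2m$; here $m = (n+1) - t$, where $t$, the number of $-1$ entries, is even. Minimizing $2m$ thus amounts to choosing $t$ to be the largest even integer not exceeding $n+1$: when $n$ is odd this is $t = n+1$, giving $m = 0$ and a fixed-point-free element, whereas when $n$ is even the parity constraint forces $t \le n$, hence $m \ge 1$ and $\#\Fix{g, DS^n} \ge 2$, with equality realized by any $g$ having a single $+1$ entry. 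This yields $\chi^\Qdle(DS^n) = 0$ for $n$ odd and $2$ for $n$ even. The one genuinely substantive step is the identification of $\Dis{DS^n}$ with the even-weight subgroup of $\mathbb{F}_2^{n+1}$; once the parity constraint is established, the fixed-point count and its minimization are routine bookkeeping.
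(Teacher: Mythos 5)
Your proposal is correct and follows essentially the same route as the paper: both identify $\Dis{DS^n}$ with the group of diagonal sign matrices having an even number of $-1$ entries (your even-weight subgroup of $\mathbb{F}_2^{n+1}$ is exactly the paper's determinant-one condition), and then minimize the fixed-point count over the parity constraint, exhibiting $-I_{n+1}$ for odd $n$ and $\mathrm{diag}(1,-1,\dots,-1)$ for even $n$. The $\mathbb{F}_2$-encoding is a pleasant bookkeeping device but does not change the substance of the argument.
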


We also consider the discrete torus $DT^n_u$,
which is a discrete subquandle of a flat torus $T^n = (S^1)^n$
parametrized by $u=(m_1, \dots, m_n) \in (\ZZ_{>0})^n$.
Recall that the topological Euler characteristic
of a flat torus is equal to $0$.
Similar to discrete spheres,
the quandle Euler characteristics of discrete tori
approximate the topological Euler characteristics of flat tori.
\begin{thm}[\cref{DiscreteTorusEuler}]
  For any positive integer $n$ 
  and any $u = (m_1, \dots, m_n) \in (\ZZ_{>0})^n$ with $m_i > 2$ 
  for all $i$,
  the quandle Euler characteristics of the discrete torus $DT^n_u$
  satisfies $\chi^\Qdle(DT^n_u) = \chi^\Top(T^n) = 0$.
\end{thm}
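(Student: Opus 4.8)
The plan is to reduce the statement to the core-quandle theorem (\cref{core}) by first identifying $DT^n_u$ explicitly as a core quandle on a finite abelian group. Writing each circle factor $S^1 = \RR/2\pi\ZZ$ in angular coordinates, its symmetric-space point symmetry at $\alpha$ is the reflection $s_\alpha(\theta) = 2\alpha - \theta$, which is exactly the core quandle structure on the abelian group $\RR/2\pi\ZZ$. The discrete subquandle consisting of the $m_i$-th roots of unity is therefore $\Core(\ZZ/m_i\ZZ)$, and since the point symmetries of the Riemannian product $T^n = (S^1)^n$ act componentwise, I would identify
\[
  DT^n_u \;\cong\; \Core(A), \qquad A := \ZZ/m_1\ZZ \times \cdots \times \ZZ/m_n\ZZ,
\]
with point symmetry $s_x(y) = 2x - y$ for $x,y \in A$.

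With this identification in hand, the conclusion follows immediately from \cref{core}: because each $m_i > 2$, the subgroup $2A \subseteq A$ is nontrivial (for instance $2\cdot(1,0,\dots,0) = (2,0,\dots,0) \neq 0$), so some point symmetry is not the identity and $\Core(A)$ is a nontrivial core quandle. Hence \cref{core} gives $\chi^\Qdle(DT^n_u) = 0$, and together with $\chi^\Top(T^n) = 0$ this proves the theorem.

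Alternatively, I would give a short self-contained argument that also makes the role of the displacement group transparent. Since each $s_y$ is an involution, $s_y^{-1} = s_y$ and $s_x \circ s_y(z) = z + 2(x-y)$, so $\Dis{DT^n_u}$ is precisely the group of translations $\tau_v$ by elements $v \in 2A$. A translation $\tau_v$ fixes a point if and only if $v = 0$; thus for any $v \in 2A \setminus \{0\}$ the element $\tau_v \in \Dis{DT^n_u}$ satisfies $\#\Fix{\tau_v, DT^n_u} = 0$, and the hypothesis $m_i > 2$ guarantees that such a $v$ exists. Taking the infimum over $\Dis{DT^n_u}$ then yields $\chi^\Qdle(DT^n_u) = 0$, paralleling the odd-dimensional case of \cref{DiscreteSphereEuler}.

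The only genuine step requiring care is the structural identification of $DT^n_u$ with $\Core(A)$, which rests on unwinding the definition of the discrete torus as a subquandle of the flat torus; once the componentwise reflection formula is confirmed, everything reduces to elementary arithmetic in $A$. I would also remark that the computation shows the hypothesis is stronger than strictly necessary — a single $m_i \geq 3$ already forces $2A \neq \{0\}$ and hence $\chi^\Qdle(DT^n_u) = 0$ — and that requiring $m_i > 2$ for every $i$ is the natural condition ensuring each factor is a genuine nondegenerate discrete circle rather than the trivial two-point quandle $\Core(\ZZ/2\ZZ)$.
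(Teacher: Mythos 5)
Your proposal is correct, but it proves the statement by a genuinely different route than the paper. The paper defines $DT^n_u := \prod_{k=1}^n R_{m_k}$ and treats \cref{DiscreteTorusEuler} as an advertised application of the product formula: each dihedral factor $R_{m_k}$ is a non-trivial Alexander quandle, hence $\chi^\Qdle(R_{m_k}) = 0$ by \cref{generalizedAlexander} (see \cref{dihedralQdle}), and \cref{directProductEuler} then gives $\chi^\Qdle(DT^n_u) = \prod_k \chi^\Qdle(R_{m_k}) = 0$. You instead collapse the entire product into a single quandle: since $R_m = \GAlex(\ZZ/m\ZZ, -\id) = \Core(\ZZ/m\ZZ)$ (the paper itself uses this identification for abelian groups in the proof of \cref{core}) and core structures are compatible with direct products of groups, one gets $DT^n_u \cong \Core(A)$ with $A = \prod_i \ZZ/m_i\ZZ$; nontriviality of this core quandle amounts to $2A \neq \{0\}$, which holds because $m_1 > 2$, so \cref{core} applies. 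Both identifications you need are straightforward to verify, and your alternative self-contained argument is also sound: the generators $s_x \circ s_y^{-1} = s_x \circ s_y$ are the translations $z \mapsto z + 2(x-y)$, so $\Dis{DT^n_u} = \{\tau_v \mid v \in 2A\}$, and any $\tau_v$ with $v \neq 0$ is fixed-point free --- this is in essence the same fixed-point-free-element mechanism that drives \cref{generalizedAlexander}. What the paper's route buys is that the corollary showcases the product formula, one of the main results of \cref{Property}; what your route buys is independence from \cref{directProductEuler} and \cref{prodGrp}, a completely explicit description of the displacement group, and the explicit remark that a single $m_i > 2$ already forces $\chi^\Qdle(DT^n_u)=0$ (this strengthening is also implicit in the paper's argument, since one vanishing factor kills the product, but it is not stated there).
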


In \cref{Property},
we study some general properties of 
quandle Euler characteristics.
As a corollary of the K\"unneth theorem,
it is known 
that the topological Euler characteristic 
of the product of CW complexes
is equal to the product of 
the topological Euler characteristics of each components
(cf. \cite{Weintraub-2014-FundamentalsAlgebraicTopology}).
Note that the direct product 
of quandles can be defined naturally.
We prove a similar statement 
for the quandle Euler characteristic of the direct product:
\begin{thm}[\cref{directProductEuler}]
  The quandle Euler characteristic of the direct product of 
  quandles $(X_1,s^1)$ and $(X_2,s^2)$ satisfies
  $\chi^{\Qdle}(X_1 \times X_2) 
  = \chi^{\Qdle}(X_1) \cdot \chi^{\Qdle}(X_2)$.
\end{thm}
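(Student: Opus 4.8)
The plan is to reduce the entire statement to a factorization of the displacement group. I would first fix the natural product quandle structure on $X_1 \times X_2$, namely $s_{(x_1,x_2)}(y_1,y_2) = (s^1_{x_1}(y_1), s^2_{x_2}(y_2))$, so that each point symmetry acts as the product map $s^1_{x_1} \times s^2_{x_2}$. Every generator of the displacement group then has the form
\begin{equation*}
  s_{(x_1,x_2)} \circ s_{(y_1,y_2)}^{-1}
  = \left( s^1_{x_1} \circ (s^1_{y_1})^{-1}, \ s^2_{x_2} \circ (s^2_{y_2})^{-1} \right),
\end{equation*}
whose first entry lies in $\Dis{X_1}$ and whose second entry lies in $\Dis{X_2}$. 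This gives the easy inclusion $\Dis{X_1 \times X_2} \subseteq \Dis{X_1} \times \Dis{X_2}$ with no work.

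The key step is the reverse inclusion, and it is here that the argument has its only real content. The idea is to exploit the freedom to choose the indices in the two factors independently: setting $x_2 = y_2$ collapses the second component to $s^2_{x_2} \circ (s^2_{x_2})^{-1} = \id$, so that as $x_1, y_1$ range freely we obtain every generator $\left( s^1_{x_1} \circ (s^1_{y_1})^{-1}, \id \right)$ and hence $\Dis{X_1} \times \{\id\} \subseteq \Dis{X_1 \times X_2}$; symmetrically $\{\id\} \times \Dis{X_2} \subseteq \Dis{X_1 \times X_2}$. Since a product group $\Dis{X_1} \times \Dis{X_2}$ is generated by its two factors $\Dis{X_1} \times \{\id\}$ and $\{\id\} \times \Dis{X_2}$, I conclude the factorization
\begin{equation*}
  \Dis{X_1 \times X_2} = \Dis{X_1} \times \Dis{X_2}.
\end{equation*}
I expect this to be the main obstacle; once it is established, everything else is formal.

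Given the factorization, for $g = (g_1, g_2) \in \Dis{X_1} \times \Dis{X_2}$ a point $(p_1, p_2)$ is fixed precisely when $g_1 p_1 = p_1$ and $g_2 p_2 = p_2$, so $\Fix{g, X_1 \times X_2} = \Fix{g_1, X_1} \times \Fix{g_2, X_2}$ and therefore $\#\Fix{g, X_1 \times X_2} = \#\Fix{g_1, X_1} \cdot \#\Fix{g_2, X_2}$. Passing to the infimum over $\Dis{X_1} \times \Dis{X_2}$ then splits: each factor satisfies $\#\Fix{g_i, X_i} \geq \chi^{\Qdle}(X_i)$, so the product is bounded below by $\chi^{\Qdle}(X_1) \cdot \chi^{\Qdle}(X_2)$ for every pair, giving the inequality $\geq$; for the reverse inequality I would choose minimizers $g_1^*, g_2^*$ realizing the two Euler characteristics (the infimum is attained, being taken over a nonempty set of non-negative integers) and evaluate at $(g_1^*, g_2^*)$. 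Because all the fixed-point counts are non-negative, these two bounds combine to yield $\chi^{\Qdle}(X_1 \times X_2) = \chi^{\Qdle}(X_1) \cdot \chi^{\Qdle}(X_2)$, as claimed.
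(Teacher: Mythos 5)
Your proposal is correct and follows essentially the same route as the paper: the collapsing trick of setting $x_2 = y_2$ to obtain $\Dis{X_1} \times \{\id\}$ (and symmetrically $\{\id\} \times \Dis{X_2}$) inside $\Dis{X_1 \times X_2}$ is exactly how the paper proves its Lemma~\ref{prodGrp}~$(2)$, and the subsequent factorization of fixed-point sets and of the infimum is the paper's argument verbatim. The only cosmetic difference is that the paper first sets up an explicit injective homomorphism $\Inn{X_1 \times X_2} \to \Inn{X_1} \times \Inn{X_2}$ to justify identifying a product map with an ordered pair, a point you use implicitly.
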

For two quandles,
one can naturally define a quandle structure on the disjoint union set,
which is called the \emph{interaction-free union} of quandles.
In the category of topological spaces,
the Euler characteristic of a disjoint union of spaces
is equal to the sum of the Euler characteristics 
of the components,
since the homology group of the disjoint union 
is isomorphic to the direct sum of 
the homology groups of the individual components.
We present an inequality concerning 
quandle Euler characteristics
that mirrors this property:
\begin{thm}[\cref{freeunionEuler}]
  Let $(X_1, s^1)$ and $(X_2, s^2)$ be quandles.
  Then, the quandle Euler characteristic of the interaction-free union 
  $X_1 \freeunion X_2$ satisfies
  \begin{equation*}
    \chi^{\Qdle}(X_1 \freeunion X_2) \leq \chi^{\Qdle}(X_1) + \chi^{\Qdle}(X_2).
  \end{equation*}
\end{thm}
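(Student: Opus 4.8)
The plan is to exhibit a single element of the displacement group $\Dis{X_1 \freeunion X_2}$ whose fixed-point set has exactly $\chi^{\Qdle}(X_1) + \chi^{\Qdle}(X_2)$ points, so that the defining infimum is bounded above by this number. Everything rests on pinning down how $\Dis{X_1 \freeunion X_2}$ sits inside the symmetric group of the underlying set $X_1 \sqcup X_2$. First I would record the defining feature of the interaction-free union: for $x \in X_1$ the point symmetry $s_x$ acts as $s^1_x$ on $X_1$ and as the identity on $X_2$, and symmetrically for $x \in X_2$. In particular every generator $s_a \circ s_b^{-1}$ of $\Dis{X_1 \freeunion X_2}$ preserves the decomposition $X_1 \sqcup X_2$, so each $g \in \Dis{X_1 \freeunion X_2}$ restricts to a permutation $g|_{X_1}$ of $X_1$ and a permutation $g|_{X_2}$ of $X_2$, giving an inclusion $\Dis{X_1 \freeunion X_2} \hookrightarrow \mathrm{Sym}(X_1) \times \mathrm{Sym}(X_2)$.

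The key step is the inclusion of $\Dis{X_1} \times \Dis{X_2}$ (viewed inside $\mathrm{Sym}(X_1) \times \mathrm{Sym}(X_2)$) into $\Dis{X_1 \freeunion X_2}$. For $x, y \in X_1$ the generator $s_x \circ s_y^{-1}$ acts as $s^1_x \circ (s^1_y)^{-1}$ on $X_1$ and as the identity on $X_2$; hence each generator of $\Dis{X_1}$ is realized by a displacement element of the union that fixes $X_2$ pointwise, and symmetrically the generators of $\Dis{X_2}$ are realized while fixing $X_1$ pointwise. Consequently, for any $g_1 \in \Dis{X_1}$ and any $g_2 \in \Dis{X_2}$, the permutation $g$ of $X_1 \sqcup X_2$ acting as $g_1$ on $X_1$ and as $g_2$ on $X_2$ lies in $\Dis{X_1 \freeunion X_2}$.

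To conclude I would choose $g_1 \in \Dis{X_1}$ and $g_2 \in \Dis{X_2}$ attaining the two Euler characteristics; the infimum of a nonempty set of nonnegative integers is attained, and if either value is infinite the asserted inequality is trivial. For the associated $g \in \Dis{X_1 \freeunion X_2}$ a point of $X_1 \sqcup X_2$ is fixed exactly when it is fixed within its own component, so $\Fix{g, X_1 \freeunion X_2} = \Fix{g_1, X_1} \sqcup \Fix{g_2, X_2}$ and therefore $\# \Fix{g, X_1 \freeunion X_2} = \chi^{\Qdle}(X_1) + \chi^{\Qdle}(X_2)$. Taking the infimum over all elements of $\Dis{X_1 \freeunion X_2}$ yields the claimed bound.

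I do not expect a deep obstacle here; the only points requiring care are the precise identification of $\Dis{X_1 \freeunion X_2}$ as a subgroup of $\mathrm{Sym}(X_1) \times \mathrm{Sym}(X_2)$ and the verification that the product permutation genuinely lies in it. It is worth explaining why only an inequality holds, which is also the reason equality can fail: the ``mixed'' generators $s_x \circ s_y^{-1}$ with $x \in X_1$ and $y \in X_2$ act as $s^1_x$ on $X_1$ and as $(s^2_y)^{-1}$ on $X_2$, and these factors need not lie in $\Dis{X_1}$ or $\Dis{X_2}$. Thus $\Dis{X_1 \freeunion X_2}$ is in general strictly larger than $\Dis{X_1} \times \Dis{X_2}$ and may contain elements with still fewer fixed points than the additive value produced above.
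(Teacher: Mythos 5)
Your proposal is correct and follows essentially the same route as the paper: both embed $\Dis{X_1} \times \Dis{X_2}$ into $\Dis{X_1 \freeunion X_2}$ by realizing each factor's generators $s_x \circ s_y^{-1}$ (with $x,y$ in the same component) as displacement elements acting trivially on the other component, then evaluate the fixed-point count of a product $g_1 g_2$ of minimizers. Your explicit handling of attainment of the infimum and the infinite case is a small point of extra care the paper glosses over, but the argument is the same.
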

Unlike the case of topological Euler characteristics,
the equality in the above theorem does not hold in general.
We will provide an example that does not achieve this equality 
(see \cref{counterExampleFreeunion}).


\section{Preliminary}\label{Preliminary}
In this section,
we review some notions of quandles
and basic facts on symmetric spaces.
In particular, 
the displacement groups play an important role.
First, let us recall the definition of quandles.

\begin{dfn}[\cite{Joyce-1982-ClassifyingInvariantKnotsKnot}]
	Let $X$ be a non-empty set and 
  let $\Map(X,X)$ be the set of all maps from $X$ to $X$. 
	For a map $s: X \to \Map(X,X)$,
  the pair $(X,s)$ is called a \emph{quandle} 
	if the following three conditions hold:
	\begin{enumerate}
		\item $s_x(x) = x$ for any $x \in X$,
		\item $s_x: X \to X$ is a bijection,
		\item $s_x \circ s_y = s_{s_x(y)} \circ s_x$ for any $x,y \in X$.
	\end{enumerate}
\end{dfn}

For quandles $(X,s^X)$ and $(Y, s^Y)$,
a map $f: X \to Y$ is called a \emph{quandle homomorphism}
if $f \circ s_x^X = s_{f(x)}^Y \circ f$ holds for any $x \in X$.
A bijective quandle homomorphism is 
called a \emph{quandle isomorphism}.

\begin{dfn}\label{qdleAut}
  The \emph{quandle automorphism group} $\Aut{X}$
  is the group consisting of all quandle 
  isomorphisms from $X$ to $X$.
  The quandle automorphism group is 
  a group under composition, 
  and acts on $X$.
  A quandle $X$ is called \emph{homogeneous} 
  if its automorphism group acts transitively on $X$.
\end{dfn}

Note that the maps $s_x: X \to X$ are quandle automorphisms.

\begin{dfn}\label{qdleInn}
  The subgroup of $\Aut{X}$ generated by 
  the set $\{s_x \mid x \in X\}$  
  is called the \emph{inner automorphism group} 
  and is denoted by $\Inn{X}$.
  A quandle $X$ is called \emph{connected} 
  if the inner automorphism group acts 
  transitively on $X$.
\end{dfn}

The following group is defined by Joyce 
\cite[\S 5]{Joyce-1982-ClassifyingInvariantKnotsKnot}
as the transvection group.

\begin{dfn}[{\cite{Joyce-1982-ClassifyingInvariantKnotsKnot}},
  \cite{Hulpke-2016-ConnectedQuandlesTransitiveGroups}]
	The subgroup of $\Inn{X}$ defined by
	\begin{equation*}
		\Dis{X} = \langle s_x \circ s_y^{-1} \mid x,y \in X \rangle_\Grp
	\end{equation*}
	is called the \emph{displacement group} of $X$.		
\end{dfn}
These groups and their actions are studied in detail 
in \cite{Hulpke-2016-ConnectedQuandlesTransitiveGroups}.
In particular, 
the actions of the displacement group 
and the inner automorphism group on a quandle 
have the same orbits.

According to the definition of 
a symmetric space given by 
Loos \cite{Loos-1969-SymmetricSpacesGeneralTheory},
its point symmetries provide a quandle structure.
For this reason,
the map $s_x: X \to X$ in the definition of a quandle
is called the \emph{point symmetry} 
at $x$ of $X$.
In the case of symmetric spaces,
Loos \cite{Loos-1969-SymmetricSpacesGeneralTheory}
gave the following properties 
about the displacement group.
These properties will be used 
in the latter argument.
\begin{prop}[{\cite[Chapter 2, Theorems 2.8 and 3.1]{Loos-1969-SymmetricSpacesGeneralTheory}}]
  \label{SymSpProp}
  Let $X$ be a symmetric space,
  and suppose that $X$ is connected as a topological space.
  Then the followings hold:
  \begin{enumerate}
    \item The displacement group $\Dis{X}$ is a connected Lie group.
    \item $X$ is connected as a quandle,
    and in particular $X$ is a homogeneous space of $\Dis{X}$.
  \end{enumerate}
\end{prop}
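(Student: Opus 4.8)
The plan is to treat $X$ as a connected smooth symmetric space in the sense of Loos, so that in addition to the quandle axioms each point symmetry satisfies $s_x \circ s_x = \id$ and has $x$ as an \emph{isolated} fixed point, i.e.\ $d(s_x)_x = -\id$ on $T_x X$. The statement then reduces to three things: that the ambient automorphism group is a Lie group, that $\Dis{X}$ sits inside it as a connected subgroup, and that the $\Dis{X}$-orbits are open.

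First I would establish the Lie group structure. The group $\Aut{X}$ of smooth quandle automorphisms acts effectively on the connected manifold $X$, and a symmetric space is rigid in the sense that an automorphism is pinned down by its $1$-jet at a single point: the relation $f \circ s_x = s_{f(x)} \circ f$ lets one propagate $f$ from the knowledge of $f(p)$ and $df_p$, since the symmetries reach a full neighbourhood of any point. By the transformation-group arguments underlying Loos's Theorem~2.8, this rigidity forces $\Aut{X}$ to be a finite-dimensional Lie group acting smoothly on $X$, and then $\Dis{X} \subseteq \Inn{X} \subseteq \Aut{X}$ inherits the structure of a Lie subgroup.

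Next I would prove connectedness of $\Dis{X}$ directly from topological connectedness. Since $s_y^{-1} = s_y$, the generators are the transvections $s_x \circ s_y$. Fixing a path $\gamma$ in $X$ from $y$ to $x$ (available because a connected manifold is path-connected), the assignment $t \mapsto s_{\gamma(t)} \circ s_y$ is a continuous path in $\Dis{X}$ running from $s_y \circ s_y = \id$ to $s_x \circ s_y$. Thus every generator lies in the identity component; multiplying such paths shows that every product of generators is joined to $\id$, so $\Dis{X}$ is path-connected and hence a connected Lie group, which is assertion~(1).

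Finally I would deduce homogeneity by showing the orbit map $\Dis{X} \to X$, $g \mapsto g(p)$, is a submersion. Here the extra symmetric-space axiom is essential: differentiating $x \mapsto (s_x \circ s_p)(p) = s_x(p)$ at $x = p$ and using $s_p(p) = p$ together with $d(s_x)_x = -\id$ shows that this map is a local diffeomorphism near $p$, so the $\Dis{X}$-orbit of $p$ contains a neighbourhood of $p$ and is open. Open orbits partition the connected space $X$, hence there is a single orbit; so $\Dis{X}$ acts transitively and $X \cong \Dis{X}/\Dis{X}_p$ is homogeneous. Since $\Dis{X}$ and $\Inn{X}$ share the same orbits, $X$ is also connected as a quandle, which is assertion~(2). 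I expect the genuine difficulty to be concentrated in the first step: promoting the \emph{a priori} merely topological automorphism group to a finite-dimensional Lie group is exactly the content Loos extracts from the rigidity of symmetric spaces and classical Lie-transformation-group theory, whereas the connectedness and submersion arguments are comparatively routine once that is in hand.
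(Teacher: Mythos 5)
This proposition is not proved in the paper at all: it is quoted verbatim from Loos (Chapter 2, Theorems 2.8 and 3.1), so there is no in-paper argument to compare against. Your sketch is, in substance, a faithful reconstruction of how Loos's proof actually goes: (i) rigidity of automorphisms makes $\Aut{X}$, hence $\Inn{X}$ and $\Dis{X}$, a finite-dimensional Lie transformation group; (ii) since $s_y^{-1}=s_y$ for a symmetric space, the path $t \mapsto s_{\gamma(t)}\circ s_y$ joins $\id$ to the generator $s_x \circ s_y$, so $\Dis{X}$ is path-connected; (iii) the map $x \mapsto s_x(p)$ is a local diffeomorphism at $p$ --- its differential is $2\,\id$, obtained by differentiating the diagonal identity $s_x(x)=x$ and combining with $d(s_p)_p=-\id$, a detail your parenthetical slightly glosses over --- so every $\Dis{X}$-orbit is open, and connectedness of $X$ forces a single orbit. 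Two provisos are worth recording. First, your step (i) is itself a black box deferred to Loos; that is legitimate here, since the paper defers the entire proposition, but it is where all the genuine work lies: Loos builds the canonical affine connection and invokes the classical theorem that the affine group of a connected manifold is a Lie group, which also guarantees that the Lie topology agrees with the compact-open topology --- a compatibility you implicitly use when asserting that $t\mapsto s_{\gamma(t)}\circ s_y$ is a continuous path \emph{in the Lie group} $\Dis{X}$. Second, an abstract subgroup of a Lie group need not be a Lie subgroup, so the claim in step (i) that $\Dis{X}$ simply ``inherits'' a Lie subgroup structure is not automatic; it should be justified by the path-connectedness you only establish later in step (ii) (via Yamabe's theorem on arcwise connected subgroups) or by Loos's direct construction. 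With the steps reordered to make that dependence explicit, your outline is sound and matches the standard proof.
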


At the end of this section,
we see the $n$-dimensional unit sphere
as a typical example of symmetric spaces.
\begin{ex}\label{ex_sphere}
  The $n$-dimensional unit sphere $S^n$
  is a compact connected Riemannian symmetric space,
  and hence the point symmetries give rise to a quandle structure on $S^n$.
  The point symmetry $s_p: S^n \to S^n$ at a point $p \in S^n$ is defined by 
  \begin{equation*}
    s_p(x) = 2\langle x, p\rangle p - x,
  \end{equation*}
  where $\langle \cdot, \cdot \rangle$ is the standard inner product of $\RR^{n+1}$.
  In this case, the inner automorphism group $\Inn{S^n}$ 
  and the displacement group $\Dis{S^n}$ are
  \begin{align*}
    \Inn{S^n} &= \begin{cases*}
      \mathrm{O}(n+1) & if $n$ is odd,\\
      \mathrm{SO}(n+1) & if $n$ is even,
    \end{cases*}\\
    \Dis{S^n} &= \mathrm{SO}(n+1).
  \end{align*}
  Then, the quandle $S^n$ is connected,
  since the orthogonal group $\mathrm{O}(n+1)$ and 
  the special orthogonal group $\mathrm{SO}(n+1)$ 
  acts transitively on $S^n$.
  In particular, 
  the displacement group is a connected Lie group
  for any dimension $n$.
\end{ex}


\section{The quandle Euler characteristics}

In this section,
we define the quandle Euler characteristics
by using the displacement group.
We also show that this notion
is a natural generalization 
of the topological Euler characteristics
of compact connected symmetric spaces. 

When a group $G$ acts on a set $X$,
we denote the set of fixed points of this action by $\Fix{G,X}$.
Similarly,
we denote the set of fixed points of 
the action of an element $g \in G$ by $\Fix{g,X}$.
The following is the definition 
of the quandle Euler characteristics.

\begin{dfn}
	Let $X$ be a quandle.
	Then the \emph{quandle Euler characteristic} $\chi^{\Qdle}(X)$ is defined by 
	\begin{equation*}
		\chi^{\Qdle}(X) := \inf\{\# \Fix{g,X} \mid g \in \Dis{X}\}.
	\end{equation*}
\end{dfn}

In order to calculate the quandle Euler characteristic
of a given quandle $X$,
one needs to know the displacement group $\Dis{X}$.
Since the displacement group of a trivial quandle is 
the trivial group, we have the following result.

\begin{ex}
  The Euler characteristic of a trivial quandle 
  is equal to its cardinality.
\end{ex}


The definition of the quandle Euler characteristic
involves a group action.
This concept is inspired by a classical result 
from Hopf and Samelson 
\cite{Hopf-1941-SatzüberWirkungsräumeGeschlossener}.
Let $G$ be a compact connected Lie group
and $T$ a maximal torus of $G$.
Then,
there exists an element $g_0 \in T$
such that the topological closure in $G$ 
of the group generated by $g_0$ is equal to $T$.
Such an element $g_0 \in T$ is 
called a generator of $T$.
Hopf and Samelson provided a formula 
for the topological Euler characteristic of
a compact connected homogeneous space 
in terms of a maximal torus and its generator.
\begin{prop}[\cite{Hopf-1941-SatzüberWirkungsräumeGeschlossener}, 
  see also \cite{Püttmann-2002-HomogeneityRankAtomsActions}]\label{HS}
  Let $M$ be a homogeneous space 
  of a compact connected Lie group $G$.
  Let us consider
  a maximal torus $T$ of $G$,
  and let $g_0 \in T$ be a generator of $T$.
  Then, 
  the topological Euler characteristic $\chi^\Top(M)$ satisfies
  \begin{equation*}
    \chi^\Top(M) = \# \Fix{T, M} = \# \Fix{g_0, M}.
  \end{equation*}
\end{prop}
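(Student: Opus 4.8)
The plan is to derive the formula from the Lefschetz fixed point theorem, exploiting that the generator $g_0$ lies in the identity component of the compact connected group $G$. Concretely, I would compute $\chi^\Top(M)$ as the Lefschetz number of the diffeomorphism $x \mapsto g_0 x$, and then evaluate that number by the sum of local fixed-point indices, showing each index equals $+1$.

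First I would record the elementary identity $\Fix{g_0, M} = \Fix{T, M}$, which already yields the second equality in the statement. For any $x \in M$ the stabilizer $G_x = \{g \in G \mid gx = x\}$ is a closed subgroup. If $g_0 x = x$, then $\langle g_0 \rangle \subseteq G_x$, and since $G_x$ is closed we get $\overline{\langle g_0 \rangle} = T \subseteq G_x$; thus $x$ is fixed by all of $T$. The reverse inclusion is immediate, so the two fixed-point sets coincide.

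Next I would compute $\chi^\Top(M)$ as the Lefschetz number $\Lambda(L_{g_0})$ of $L_{g_0}\colon M \to M,\ x \mapsto g_0 x$. Because $G$ is connected, a path from the identity to $g_0$ induces an isotopy from $\id_M$ to $L_{g_0}$, so homotopy invariance gives $\Lambda(L_{g_0}) = \Lambda(\id_M) = \chi^\Top(M)$. It then remains to evaluate $\Lambda(L_{g_0}) = \sum_{p} \mathrm{ind}_p(L_{g_0})$ via the isolated-fixed-point formula. The key local input is that at each $p \in \Fix{T,M}$ the isotropy representation of $T$ on $T_pM$ has no nonzero invariant vector: writing $M = G/G_p$ with $T \subseteq G_p$, one has $T_pM \cong \mathfrak{g}/\mathfrak{g}_p$ as $T$-modules, and since a maximal torus of a compact connected group is its own centralizer we get $\mathfrak{g}^T = \mathfrak{t} \subseteq \mathfrak{g}_p$, whence $(\mathfrak{g}/\mathfrak{g}_p)^T = \mathfrak{g}^T/\mathfrak{g}_p^T = 0$ by complete reducibility. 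A vector fixed by $g_0$ is fixed by $\overline{\langle g_0 \rangle} = T$, hence lies in $(T_pM)^T = 0$, so $d(L_{g_0})_p$ has no eigenvalue $1$; this makes $p$ isolated and nondegenerate and shows $\Fix{T,M}$ is finite. Fixing a $G$-invariant Riemannian metric makes each $d(L_{g_0})_p$ orthogonal, and an orthogonal map without eigenvalue $1$ has eigenvalues $-1$ and conjugate pairs $e^{\pm i\theta_j}$ with $\theta_j \notin 2\pi\ZZ$, so $\det\!\big(I - d(L_{g_0})_p\big) > 0$ and $\mathrm{ind}_p(L_{g_0}) = +1$. Summing, $\Lambda(L_{g_0}) = \#\Fix{g_0,M}$, and combining the steps gives $\chi^\Top(M) = \#\Fix{g_0,M} = \#\Fix{T,M}$.

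The hard part will be this last local step: guaranteeing that the fixed points are genuinely isolated with index exactly $+1$, so that the isolated-point form of the Lefschetz theorem is applicable. This rests on the structure-theoretic fact that a maximal torus equals its own centralizer, which forces $(T_pM)^T = 0$, together with the positivity of $\det(I - A)$ for a fixed-point-free orthogonal $A$. The seemingly degenerate case $\Fix{T,M} = \emptyset$ is then automatically consistent, since the Lefschetz number vanishes and forces $\chi^\Top(M) = 0$.
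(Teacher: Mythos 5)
The paper gives no proof of this proposition --- it is quoted as a classical result of Hopf and Samelson (with P\"uttmann as a modern reference) --- so there is no internal argument to compare yours against; your proposal supplies the standard proof that lies behind the citation, and it is correct. All the key steps check out: $\Fix{g_0,M}=\Fix{T,M}$ because stabilizers are closed and $\overline{\langle g_0\rangle}=T$; $\Lambda(L_{g_0})=\chi^{\Top}(M)$ by homotopy invariance, using connectedness of $G$ and compactness of $M=G/G_p$; nondegeneracy of each fixed point follows from $Z_G(T)=T$, which gives $\mathfrak{g}^T=\mathfrak{t}\subseteq\mathfrak{g}_p$ and hence $(T_pM)^T\cong(\mathfrak{g}/\mathfrak{g}_p)^T=0$ by complete reducibility; and with a $G$-invariant metric the differential $d(L_{g_0})_p$ is orthogonal without eigenvalue $1$, so $\det\bigl(I-d(L_{g_0})_p\bigr)>0$ and every local index is $+1$, yielding $\chi^{\Top}(M)=\#\Fix{g_0,M}$, with the empty fixed-point set handled consistently.
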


Using this proposition, 
one can show that the notion of quandle Euler characteristics
is a generalization of the notion of topological Euler characteristics of
compact connected Riemannian symmetric spaces.
Recall that a Riemannian symmetric space is a quandle by the point symmetries,
and each point symmetry is an isometry.

\begin{thm}\label{SymSpEuler}
	For a compact connected Riemannian symmetric space,
	the quandle Euler characteristic is equal to 
	the topological Euler characteristic.
\end{thm}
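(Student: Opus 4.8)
The plan is to realize $\Dis{X}$ as a compact connected Lie group acting transitively on $X$ and then to squeeze the quandle Euler characteristic against the topological one from both sides using the Hopf--Samelson formula (\cref{HS}). First I would set $G := \Dis{X}$. Since the symmetric space $X$ is connected, \cref{SymSpProp} tells us that $G$ is a connected Lie group and that $X$ is a homogeneous space of $G$, say $X = G/H$. To enter the setting of \cref{HS}, I must verify that $G$ is compact. Because $X$ is a Riemannian symmetric space, each point symmetry $s_x$ is an isometry, so $\Inn{X}$, and hence $G = \Dis{X}$, is a subgroup of the isometry group $\mathrm{Isom}(X)$. As $X$ is compact, $\mathrm{Isom}(X)$ is a compact Lie group by the Myers--Steenrod theorem, and the transvection group $G$ is a closed connected subgroup of it; a closed subgroup of a compact group is compact. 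Thus $G$ is a compact connected Lie group acting transitively on $X$.

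Next I would apply \cref{HS} to $G$ and $M = X$. Choosing a maximal torus $T$ of $G$ together with a generator $g_0 \in T$, the formula gives
\begin{equation*}
  \chi^\Top(X) = \#\Fix{T,X} = \#\Fix{g_0,X}.
\end{equation*}
Since $g_0 \in T \subseteq G = \Dis{X}$, the element $g_0$ is an admissible competitor in the infimum defining $\chi^{\Qdle}(X)$, which immediately yields the upper bound $\chi^{\Qdle}(X) \leq \#\Fix{g_0,X} = \chi^\Top(X)$.

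For the reverse inequality I would show that no element of $G$ can fix fewer than $\chi^\Top(X)$ points. The key classical fact is that in a compact connected Lie group every element lies in some maximal torus (equivalently, the exponential map is surjective). Hence for an arbitrary $g \in G$ there is a maximal torus $T_g$ with $g \in T_g$, so that $\Fix{T_g,X} \subseteq \Fix{g,X}$. Since all maximal tori of $G$ are conjugate, conjugation by a suitable $a \in G$ carries $T$ to $T_g$, and the map $x \mapsto ax$ restricts to a bijection $\Fix{T,X} \to \Fix{T_g,X}$; therefore $\#\Fix{T_g,X} = \#\Fix{T,X} = \chi^\Top(X)$. Combining, $\#\Fix{g,X} \geq \chi^\Top(X)$ for every $g \in \Dis{X}$, and taking the infimum gives $\chi^{\Qdle}(X) \geq \chi^\Top(X)$. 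Together with the upper bound this establishes the desired equality, and in particular shows that the infimum is attained, realized by any generator of a maximal torus.

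The step I expect to be the main obstacle is the bridge between the purely algebraic displacement group and the Lie-theoretic hypotheses of \cref{HS}: one must be confident that $\Dis{X}$, defined abstractly as the group generated by the $s_x \circ s_y^{-1}$, is genuinely a \emph{compact} connected Lie group acting smoothly and transitively on $X$, so that \cref{SymSpProp} together with the Myers--Steenrod theorem may be invoked. Once compactness and transitivity are secured, both inequalities follow routinely from the Hopf--Samelson formula and the standard maximal-torus theory of compact connected Lie groups.
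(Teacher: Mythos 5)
Your proposal is correct and follows essentially the same route as the paper: establish that $\Dis{X}$ is a compact connected Lie group acting transitively on $X$ via \cref{SymSpProp} and the compactness of the isometry group, then squeeze $\chi^{\Qdle}(X)$ against $\chi^{\Top}(X)$ from both sides with the Hopf--Samelson formula and the conjugacy of maximal tori. If anything, you are slightly more explicit than the paper in invoking the fact that every element of a compact connected Lie group lies in some maximal torus, a step the paper's proof uses implicitly when it conjugates an arbitrary $g \in \Dis{X}$ into the fixed maximal torus $T$.
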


\begin{proof}
  Let $X$ be a compact connected Riemannian symmetric space.
  By \cref{SymSpProp},
  the displacement group $G := \Dis{X}$ is a connected Lie group,
  and acts transitively on $X$.
  Since each point symmetry is an isometry,
  the displacement group $G := \Dis{X}$ is 
  a subgroup of the isometry group $\mathrm{Isom}(X)$ of $X$.
  Since $X$ is a compact Riemannian symmetric space,
  the isometry group  is compact
  \cite[\S 5]{Myers-1939-GroupIsometriesRiemannianManifold},
  and in particular, $G$ is compact.
  Let $T$ be a maximal torus in $G$ and let $g_0$ be a generator of $T$. 
  Since 
  $\chi^\Top(X) = \# \Fix{g_0, X}$ by Proposition \ref{HS},
  we have
  \begin{equation*}
    \chi^\Top(X) \ge \inf_{g \in G} \Fix{g, X} = \chi^\Qdle(X).
  \end{equation*}
  
  It remains to prove 
  the converse inequality.
  Let us take any $g \in G$, and we will show
  \begin{equation*}
    \chi^{\Top}(X) \leq \# \Fix{g, X}.
  \end{equation*}
  Since $G$ is a compact Lie group,
  all maximal tori are conjugate.
  Hence there exists $h \in G$ such that
  $h g h^{-1} \in T$.
  Therefore, it satisfies
  \begin{equation*}
    \#\Fix{g, X} = \# \Fix{hgh^{-1}, X} \geq \# \Fix{T, X} = \chi^{\Top}(X),
  \end{equation*}
  which completes the proof.
\end{proof}

\section{Examples of the quandle Euler characteristics}\label{Example}

In this section, 
we calculate the quandle Euler characteristics 
of specific examples of quandles.

\subsection{The generalized Alexander quandles}

As the first example, 
we consider the generalized Alexander quandles.
For a group $G$ and a group automorphism $\sigma \in \Aut{G}$,
the \emph{generalized Alexander quandle} $\GAlex(G, \sigma)$
is a quandle $(G, s)$, 
where the map $s: G \to \Map(G,G)$ is
defined by $s_h(g) = h \sigma(h^{-1} g)$ for $g,h \in G$.
The inverse of a point symmetry $s_h$ 
is given by $s_h^{-1}(g) = h \sigma^{-1}(h^{-1} g)$.
Note that
a generalized Alexander quandle $\GAlex(G, \sigma)$
is trivial if and only if 
the automorphism $\sigma$ is trivial.
The next theorem 
determines the quandle Euler characteristics
of generalized Alexander quandles.

\begin{thm}\label{generalizedAlexander}
	If $\sigma$ is a non-trivial group automorphism
  of a group $G$, 
	then the quandle Euler characteristic of 
  the generalized Alexander quandle $\GAlex(G, \sigma)$ is equal to 
	$0$.
\end{thm}
\begin{proof}
  Since $\sigma$ is non-trivial,
  there exists $g \in G$ such that $\sigma(g) \neq g$.
  Consider $s_g \circ s_1^{-1} \in \Dis{\GAlex(G, \sigma)}$
  for $g \in G$,
  where $1$ is the identity of $G$.
  It is enough to show that 
  $s_g \circ s_1$ has no fixed points.
  For any $x \in G$, one has
  \begin{align*}
    s_g \circ s_1^{-1}(x) 
    = s_g(\sigma^{-1}(x))
    = g \sigma(g^{-1} \sigma^{-1}(x))
    = g \sigma(g)^{-1} x.
  \end{align*}
  Since $\sigma(g) \neq g$,
  we have $g \sigma(g)^{-1} \neq 1$,
  and thus $s_g \circ s_1^{-1}(x) \neq x$.
  This yields that
  $s_g \circ s_1^{-1}$ has no fixed points,
  which completes the proof.
\end{proof}

A generalized Alexander quandle is called 
an \emph{Alexander quandle}
(or an \emph{affine quandle})
if it is defined by an abelian group $G$.
Recall that the dihedral quandle $R_n$
is a typical example of Alexander quandles.

\begin{ex}\label{dihedralQdle}
  The \emph{dihedral quandle} $R_n$ 
  is a quandle isomorphic to 
  $\GAlex(\ZZ/ n\ZZ, \sigma)$,
  where $\sigma: \ZZ/ n\ZZ \to \ZZ/ n\ZZ$ is 
  the group automorphism defined by $\sigma(a) = -a$.
  The dihedral quandles can be realized as 
  discrete subquandles of the circle $S^1$
  (see \cref{R5}).
  Note that $R_n$ is non-trivial if $n>2$. 
  Hence, 
  for any integer $n > 2$, 
  it follows from \cref{generalizedAlexander} that
  \[
    \chi^{\Qdle}(R_n) = \chi^\Top(S^1) = 0.
  \]
\end{ex}

\begin{figure}[h]
  \centering
  \includegraphics[keepaspectratio, scale=0.4]{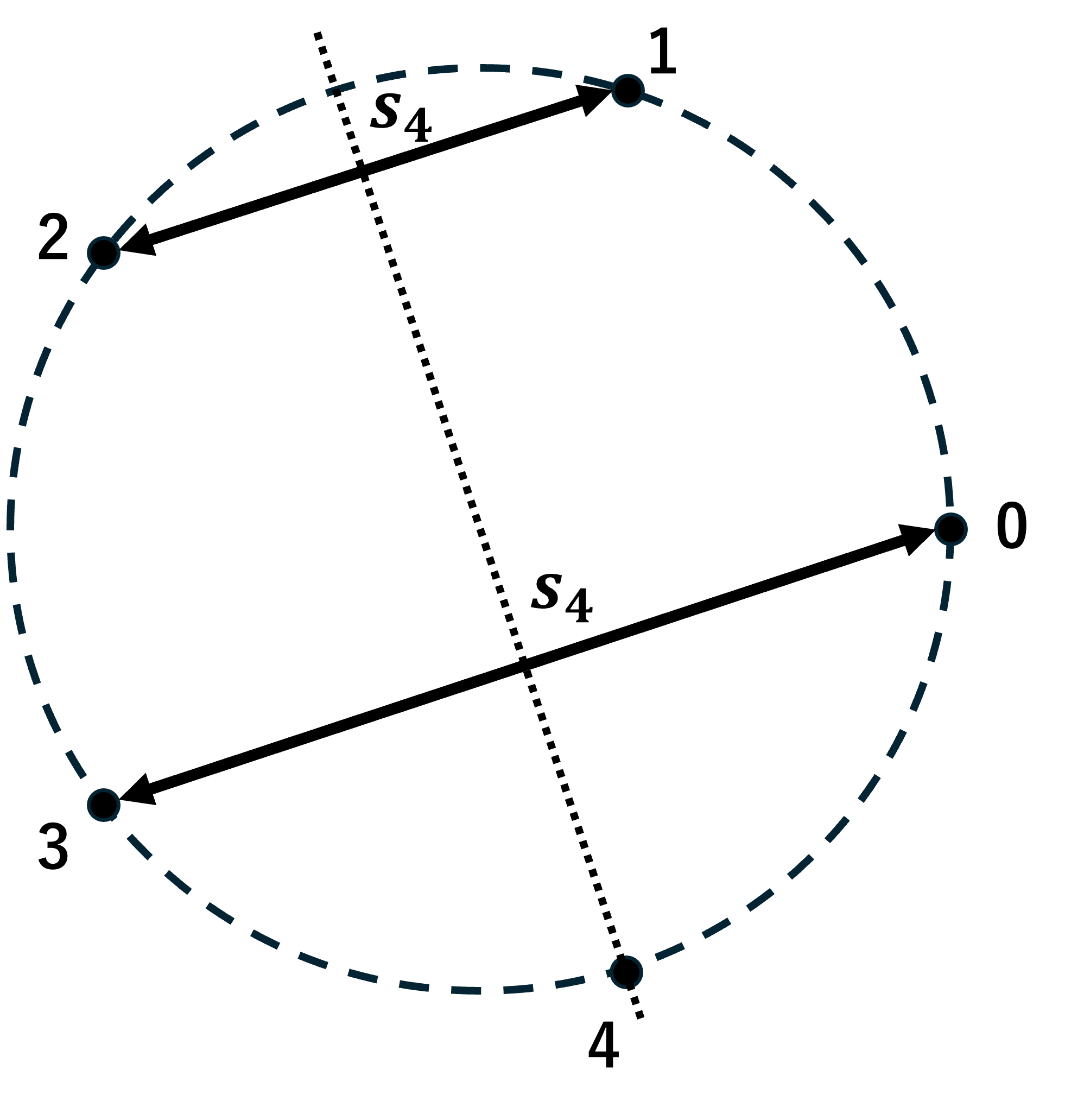}
  \caption{The dihedral quandle $R_5$ realized as a discrete subquandle of $S^1$.}
  \label{R5}
\end{figure}

\subsection{The core quandles}

In this subsection, 
we consider the core quandles.
For a group $G$,
the \emph{core quandle} $\Core(G)$ is a quandle $(G, s)$, 
where the map $s: G \to \Map(G,G)$ is
defined by $s_h(g) = hg^{-1}h$ for $g,h \in G$.
In particular, each point symmetry is an involution.
Note that the core quandle $\Core(G)$ is trivial if and only if
the order of any element in $G$ is less than or equal to $2$;
in other words,
the group $G$ is isomorphic to a direct product 
of some copies of $\ZZ/2\ZZ$.
Recall that 
any Lie group has a symmetric space structure
given by the core quandle structure.
The next theorem is an analogy of the fact that
the Euler characteristic of a non-trivial compact connected Lie group is 0.

\begin{thm}\label{core}
  The quandle Euler characteristic of 
  a non-trivial core quandle is equal to $0$.
\end{thm}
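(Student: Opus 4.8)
The plan is to exhibit, for any non-trivial core quandle $\Core(G)$, a single element of the displacement group acting without fixed points. The core quandle has point symmetries $s_h(g) = h g^{-1} h$, so a typical generator of $\Dis{\Core(G)}$ is $s_a \circ s_b^{-1}$. Since each $s_h$ is an involution (as noted in the excerpt, $s_h(s_h(g)) = h(h g^{-1} h)^{-1} h = g$), we have $s_b^{-1} = s_b$, and thus the generators are simply the products $s_a \circ s_b$. First I would compute this composite explicitly: for any $g \in G$,
\begin{equation*}
  s_a \circ s_b(g) = s_a(b g^{-1} b) = a (b g^{-1} b)^{-1} a = a b^{-1} g b^{-1} a.
\end{equation*}
Writing $c := a b^{-1}$, this becomes $s_a \circ s_b(g) = c \, g \, b^{-1} a$; more symmetrically, one checks that left multiplication by $ab^{-1}$ and right multiplication by $b^{-1}a$ describe the action, so the map is $g \mapsto (ab^{-1}) g (b^{-1}a)$.

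The key step is then to choose $a, b$ so that this map has no fixed point. A fixed point satisfies $g = (ab^{-1}) g (b^{-1}a)$, i.e. $(ab^{-1})^{-1} g = g (b^{-1}a)$, which rearranges to a conjugation-type equation. The cleanest route is to specialize: since $\Core(G)$ is non-trivial, by the remark in the excerpt there exists an element $w \in G$ of order strictly greater than $2$, i.e. $w^2 \neq 1$. I would try taking $b = 1$ and $a = w$, so that the candidate element is $s_w \circ s_1$, giving
\begin{equation*}
  s_w \circ s_1(g) = w g w.
\end{equation*}
A fixed point of $g \mapsto w g w$ would satisfy $w g w = g$, equivalently $w g = g w^{-1}$, i.e. $g^{-1} w g = w^{-1}$. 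This says $w$ is conjugate to its inverse, which can certainly happen, so $b=1$ alone may not suffice and I expect this to be where care is needed.

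The main obstacle is therefore that the naive element $g \mapsto w g w$ can have fixed points (any $g$ conjugating $w$ to $w^{-1}$), so I would instead search for a fixed-point-free element more cleverly. The natural fix is to use the map $g \mapsto c g c$ for a well-chosen $c$ and analyze its fixed-point set as the solution set of $cgc = g$, i.e. $g^{-1} c^{-1} g = c$ — but to guarantee emptiness one wants an element whose square acts freely by translation. A robust approach is to consider instead the square or a product that produces pure translation: note $(s_a s_b)(s_b s_a)$ and related combinations can be arranged to act by a single-sided multiplication $g \mapsto t g$ with $t \neq 1$, which is manifestly fixed-point-free since $tg = g$ forces $t = 1$. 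Concretely, I would look for $a,b,c,d$ with $s_a s_b s_c s_d$ acting as left translation by a nontrivial element, using the non-triviality of $G$ to supply such an element; this reduces the whole problem to the trivial observation that a nontrivial left (or right) translation on a group has no fixed points. The crux of the writeup is selecting the generators of $\Dis{\Core(G)}$ whose product telescopes into a one-sided translation by a nontrivial element, and verifying that non-triviality of the core quandle (equivalently, the existence of an element of order $>2$) guarantees such a nontrivial translation element exists.
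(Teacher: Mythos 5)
Your overall strategy coincides with the paper's: find an element of $\Dis{\Core(G)}$ acting as a one-sided translation by a non-identity element, which is then automatically fixed-point free. Your computation $s_a \circ s_b(g) = ab^{-1}\,g\,b^{-1}a$ is correct, and so is your diagnosis of the failure mode of the naive candidate (namely, $s_w \circ s_1$ has a fixed point exactly when $w$ is conjugate to $w^{-1}$). However, the proposal stops at exactly the point where the proof has to happen: you never exhibit the elements whose product telescopes to a translation, and your closing claim --- that non-triviality of $\Core(G)$ guarantees a nontrivial one-sided translation inside $\Dis{\Core(G)}$ --- is asserted, not proven. Since that claim is essentially the theorem itself, this is a genuine gap.

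Filling the gap requires an abelian/non-abelian case division that your outline hides. If you force a four-fold product $s_a \circ s_b \circ s_c \circ s_d : g \mapsto (ab^{-1}cd^{-1})\,g\,(d^{-1}cb^{-1}a)$ to be one-sided by making one factor collapse to the identity, the surviving factor is always a commutator; this is precisely the paper's non-abelian construction $\varphi = (s_{g^{-1}h^{-1}} \circ s_1^{-1}) \circ (s_h \circ s_{g^{-1}}^{-1})$, which acts by $x \mapsto x[g,h]$ and needs $[g,h] \neq 1$. When $G$ is abelian this mechanism produces only the identity, and one must instead use that $s_w \circ s_1(g) = w^2 g$ is already a translation, nontrivial precisely because non-triviality of $\Core(G)$ supplies some $w$ with $w^2 \neq 1$ (the paper handles this case by identifying $\Core(G) \cong \GAlex(G, g \mapsto g^{-1})$ and invoking \cref{generalizedAlexander}). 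The two mechanisms do not substitute for each other: commutators vanish when $G$ is abelian, while in the non-abelian case translation by $w^2$ need not belong to $\Dis{\Core(G)}$ at all --- for the discrete Heisenberg group, for instance, one can check that the one-sided translations lying in $\Dis{\Core(G)}$ are exactly those by central elements, and $w^2$ is not central when $w$ is a standard generator. So the ``crux'' you defer is not a routine verification from the existence of an element of order greater than $2$; it consists of the two explicit constructions above, split by cases, and writing them out is exactly the paper's proof.
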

\begin{proof}
  Let $G$ be a group.
  If $G$ is abelian,
  the quandle $\Core(G)$ is isomorphic to 
  the Alexander quandle $\GAlex(G, \sigma)$,
  where the group automorphism $\sigma: G \to G$
  is given by $\sigma(g) = g^{-1}$ for $g \in G$.
  Since the quandle is non-trivial,
  the quandle Euler characteristic is equal to $0$
  by \cref{generalizedAlexander}.

  Let us assume that $G$ is not abelian.
  Then, there exist $g, h \in G$
  such that $[g, h]:=ghg^{-1}h^{-1} \neq 1$,
  where $1$ denotes the identity element.
  Consider the elements
  $g_1:= s_{g^{-1}h^{-1}} \circ s_{1}^{-1},
  g_2:= s_{h} \circ s_{g^{-1}}^{-1} \in \Dis{\Core(G)}$.
  For any $x \in \Core(G)$, we have
  \begin{align*}
    & g_1(x) = s_{g^{-1}h^{-1}} \circ s_1^{-1}(x)
    = s_{g^{-1}h^{-1}}(x^{-1}) = g^{-1}h^{-1} x g^{-1}h^{-1},\\
    & g_2(x) = s_{h} \circ s_{g^{-1}}^{-1}(x)
    = s_h(g^{-1}x^{-1}g^{-1}) = h g x g h.
  \end{align*}
  Hence, the map $\varphi := g_1 \circ g_2 \in \Dis{X}$
  satisfies
  \begin{align*}
    \varphi(x) = g_1 \circ g_2(x) = g_1(h g x g h)
    = g^{-1}h^{-1}(h g x g h)g^{-1}h^{-1}
    = x [g, h].
  \end{align*}
  Since $[g, h] \neq 1$,
  we have 
  $\varphi(x) \neq x$.
  Therefore,
  the map $\varphi$ has no fixed points,
  which completes the proof.
\end{proof}

\subsection{Discrete spheres}
In this subsection, 
we determine the Euler characteristics of discrete spheres,
which are defined as particular finite subquandles in spheres.
Recall that the $n$-dimensional unit sphere $S^n$ is a Riemannian symmetric space.
See \cref{ex_sphere}.

\begin{dfn}
  Let $\{e_i\}$ be the standard basis of $\RR^{n+1}$.
  Then the subset $DS^n :=\{\pm e_1, \dots, \pm e_{n+1}\}$ is 
  called the \emph{discrete $n$-sphere}. 
\end{dfn}
As in \cite[Example 2.4]{Furuki-2024-HomogeneousQuandlesAbelianInner},
it is easy to show that
$DS^n$ is a subquandle in $S^n$.
In fact,
the point symmetries satisfy
\begin{align*}
  s_{e_i} = s_{-e_i}, \qquad
  s_{e_{i}}(\pm e_i) = \pm e_i, \qquad
  s_{e_{i}}(\pm e_j) = \mp e_j \quad (\text{for } i \neq j).
\end{align*}
Note that these maps are the restrictions of 
some orthogonal transformations of $\RR^{n+1}$.
Hence,
we obtain an injective orthogonal representation 
$\rho: \Inn{DS^n} \to O(n+1)$ given by
\begin{equation*}
  \rho(s_{e_i}) = \mathrm{diag}(\varepsilon_1, \dots, \varepsilon_{n+1}),
  \quad
  \varepsilon_j = \begin{cases*}
    1 & if $i = j$,\\
    -1 & if $i \neq j$,
  \end{cases*}
\end{equation*}
where $\mathrm{diag}(\varepsilon_1, \dots, \varepsilon_{n+1})$
denotes the diagonal matrix whose $(i,i)$-entry is $\varepsilon_i$.
\begin{lem}
  The image $\rho(\Dis{DS^n})$ satisfies
  \begin{equation}\label{DiscreteSphereDis}
    \rho(\Dis{DS^n}) = 
    \{A =\mathrm{diag}(\varepsilon_1, \dots, \varepsilon_{n+1}) \mid
    \varepsilon_i \in \{\pm 1\}, \det(A) = 1\}.
  \end{equation}
\end{lem}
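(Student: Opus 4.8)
The plan is to prove the stated equality by two inclusions, after first identifying a convenient generating set for $\Dis{DS^n}$. The key preliminary observation is that, because $s_{e_i} = s_{-e_i}$, the quandle $DS^n$ has only $n+1$ distinct point symmetries $s_{e_1}, \dots, s_{e_{n+1}}$, and each is an involution: from the displayed formulas one reads off that $\rho(s_{e_i})^2 = I$, so that $s_{e_i}^{-1} = s_{e_i}$. Consequently every generator $s_x \circ s_y^{-1}$ of the displacement group reduces to a product of the form $s_{e_i} \circ s_{e_j}$, and I would record at the outset that $\Dis{DS^n}$ is generated by the elements $s_{e_i} \circ s_{e_j}$ with $i \neq j$ (the case $i = j$ yielding the identity).

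For the inclusion ``$\subseteq$'', I would compute $\rho(s_{e_i} \circ s_{e_j})$ directly from the definition of $\rho$. Since $\rho(s_{e_i})$ is the diagonal matrix with $+1$ in position $i$ and $-1$ in every other position, the product $\rho(s_{e_i})\rho(s_{e_j})$ is the diagonal matrix $g_{ij}$ carrying $-1$ exactly in positions $i$ and $j$ and $+1$ elsewhere; in particular $\det g_{ij} = 1$. Let $K$ denote the right-hand side of \eqref{DiscreteSphereDis}, that is, the set of diagonal sign matrices of determinant $1$. Then $K$ is a subgroup of $O(n+1)$, being the kernel of the determinant homomorphism on the abelian group $(\ZZ/2\ZZ)^{n+1}$ of diagonal sign matrices, and each generator $g_{ij}$ lies in $K$. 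Since $\rho$ is a homomorphism, the image $\rho(\Dis{DS^n}) = \langle g_{ij} \mid i \neq j \rangle$ is therefore contained in $K$.

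For the reverse inclusion ``$\supseteq$'', I would show that the $g_{ij}$ already generate all of $K$. Given an arbitrary $A = \mathrm{diag}(\varepsilon_1, \dots, \varepsilon_{n+1}) \in K$, set $S = \{i \mid \varepsilon_i = -1\}$; the condition $\det A = 1$ forces $|S|$ to be even, say $|S| = 2m$. Pairing the elements of $S$ with disjoint supports as $\{a_1, b_1\}, \dots, \{a_m, b_m\}$, the product $g_{a_1 b_1} \cdots g_{a_m b_m}$ flips precisely the signs indexed by $S$ and hence equals $A$. Thus $A \in \rho(\Dis{DS^n})$, giving $K \subseteq \rho(\Dis{DS^n})$ and completing the proof.

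I do not expect a serious obstacle here: once the involution property collapses the generating set to the matrices $g_{ij}$, both inclusions amount to bookkeeping on diagonal sign patterns. The only step requiring a little care is the surjectivity argument, where every even-weight sign pattern must be realized as a product of the two-flip matrices $g_{ij}$; the disjoint pairing above does this cleanly, and the parity constraint that $|S|$ is even is matched exactly by the determinant condition defining $K$.
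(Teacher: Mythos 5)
Your proof is correct and follows essentially the same approach as the paper: both reduce to the generating set $\{s_{e_i}\circ s_{e_j}^{-1}\}$, compute that its image consists of the diagonal matrices with $-1$ exactly in positions $i,j$ (hence determinant $1$), and obtain the reverse inclusion by writing an arbitrary even sign pattern as a product of such two-flip matrices. Your write-up is in fact slightly more explicit than the paper's, which leaves the disjoint-pairing step (and the involution property $s_{e_i}^{-1}=s_{e_i}$) implicit.
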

\begin{proof}
  In this case, 
  it is easy to see that
  \begin{align*}
    \rho(\Inn{DS^n}) \subset \left\{
    \mathrm{diag}(\varepsilon_1, \dots, \varepsilon_{n+1})  
    \in O(n+1)
    \mid \varepsilon_i \in \{\pm 1\} 
    \right\}.
  \end{align*}
  Recall that $S= \{s_{e_i} \circ s_{e_j}^{-1}\}$
  is a generating set of $\Dis{DS^n}$.
  For $i \neq j$,
  one can obtain the matrix expression
  \begin{equation*}
    \rho(s_{e_i} \circ s_{e_j}^{-1})
    = \mathrm{diag}(\varepsilon_1, \dots, \varepsilon_{n+1})
    ,\qquad
    \text{where }
    \varepsilon_k = \begin{cases*}
      1 & $k \not\in \{i, j\}$,\\
      -1 & $k \in \{i, j\}$.
    \end{cases*}
  \end{equation*}
  Hence, 
  the determinant of any element in $\rho(S)$ is equal to $1$.
  Since $\rho(\Dis{DS^n})$ is generated by $\rho(S)$,
  this shows the inclusion $(\subset)$ in the assertion.
  In order to prove the inverse inclusion, 
  let us take $B = \mathrm{diag}(\varepsilon_1, \dots, \varepsilon_{n+1})$
  in the right-hand side of the assertion.
  Then the number of indices $i$ with $\varepsilon_i = 1$ is even.
  Therefore the matrix $B$ can be written as 
  a product of elements in $\rho(S)$.
  This completes the proof.
\end{proof}

In the following,
we identify $\Inn{DS^n}$ with $\rho(\Inn{DS^n})$,
and also $\Dis{DS^n}$ with $\rho(\Dis{DS^n})$.
The next theorem determines 
the quandle Euler characteristics of the discrete spheres.

\begin{thm}\label{DiscreteSphereEuler}
	For any positive integer $n$, it satisfies
	$\chi^\Qdle(DS^n) = \chi^\Top(S^n)$, that is,
	\begin{equation*}
		\chi^{\Qdle}(DS^n) = 
		\begin{cases*}
			0 & if $n$ is odd,\\
			2 & if $n$ is even.
		\end{cases*}
	\end{equation*}
\end{thm}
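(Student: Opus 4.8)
The plan is to exploit the explicit description of $\Dis{DS^n}$ furnished by the preceding lemma, under which every element of the displacement group is a diagonal matrix $A = \mathrm{diag}(\varepsilon_1, \dots, \varepsilon_{n+1})$ with $\varepsilon_i \in \{\pm 1\}$ and $\det(A) = 1$. First I would make the action on the $2(n+1)$ points of $DS^n = \{\pm e_1, \dots, \pm e_{n+1}\}$ completely transparent: since $A(\pm e_j) = \pm \varepsilon_j e_j$, the pair $\{e_j, -e_j\}$ is fixed pointwise when $\varepsilon_j = 1$ and is interchanged when $\varepsilon_j = -1$. Consequently, writing $k = \#\{j : \varepsilon_j = 1\}$, one has $\#\Fix{A, DS^n} = 2k$.

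Next I would translate the constraint $\det(A) = 1$ into a parity condition. The determinant equals the product of the $\varepsilon_i$, so $\det(A) = 1$ is equivalent to saying that the number of indices with $\varepsilon_i = -1$ is even; since this number is $(n+1) - k$, we obtain $k \equiv n+1 \pmod 2$. Minimizing $\#\Fix{A, DS^n} = 2k$ over $A \in \Dis{DS^n}$ thus amounts to minimizing the nonnegative integer $k$ subject to $k \equiv n+1 \pmod 2$, and every value of $k$ with the correct parity is realized by an actual element of the displacement group, because the lemma characterizes $\Dis{DS^n}$ as exactly these diagonal matrices. When $n$ is odd, $n+1$ is even, so $k = 0$ is admissible (take $A = -I$), giving infimum $0$; when $n$ is even, $n+1$ is odd, so $k$ must be odd and hence $k \geq 1$, with $k = 1$ attained, giving infimum $2$. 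This yields exactly $\chi^\Qdle(DS^n) = \chi^\Top(S^n)$.

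The argument is short, and there is no deep obstacle once the displacement group has been identified; the only point demanding care is the bookkeeping in the even/odd case split, namely verifying both that the lower bound $k \geq 1$ (for $n$ even) genuinely follows from the parity constraint and that the extremal diagonal matrices achieving $k = 0$ or $k = 1$ truly lie in $\Dis{DS^n}$ rather than merely in $\Inn{DS^n}$. Since the lemma already pins down $\Dis{DS^n}$ as precisely the determinant-one diagonal sign matrices, both the attainability and the lower bound are immediate, so the whole proof reduces to the parity computation above.
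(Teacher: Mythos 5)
Your proof is correct and follows essentially the same route as the paper: both rest on the lemma identifying $\Dis{DS^n}$ with the determinant-one diagonal sign matrices, then reduce the fixed-point count to the parity of $\#\{i \mid \varepsilon_i = 1\}$, exhibiting $-I_{n+1}$ (for $n$ odd) and a matrix with a single $+1$ entry (for $n$ even) as the minimizers. Your presentation is slightly more uniform, phrasing both cases as one minimization of $2k$ subject to $k \equiv n+1 \pmod 2$, but the substance is identical to the paper's case-by-case argument.
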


\begin{proof}
	Suppose that $n$ is odd.
	Then the displacement group $\Dis{DS^n}$ 
  contains the identity matrix $-I_{n+1}$.
	The matrix $-I_{n+1}$ has no fixed points in $DS^n$. 
	Therefore we have $\chi^\Qdle(DS^n) = 0$.

	Suppose that $n$ is even.
	Then 
	we have 
	\begin{equation*}
		A_0 = \left(\begin{matrix}
			1 & 0\\
			0 & -I_{n}
		\end{matrix}\right) \in \Dis{DS^n}.
	\end{equation*}
	It is clear that 
	$A_0$ fixes two elements $\pm e_1$ in $DS^n$,
	which shows $\chi^\Qdle(DS^n) \leq 2$.
	Recall that any element $g \in \Dis{DS^n}$
	can be expressed as 
	\begin{align*}
		g = \mathrm{diag}(\varepsilon_1, \dots , \varepsilon_{n+1}),
	\end{align*}
	where $\varepsilon_i \in \{\pm 1\}$ and $\det(g) = 1$.
	Since $n+1$ is odd and hence $\#\{i \mid \varepsilon_i = 1\}$ is odd,
  we have $\#\{i \mid \varepsilon_i = 1\} \geq 1$.
	Therefore,
	$g$ has at least two fixed points in $DS^n$.
  This shows $\chi^\Qdle(DS^n) \geq 2$,
	which completes the proof.
\end{proof}

\subsection{Quandles obtained from weighted graphs}
  In this subsection, 
  we study the quandle Euler characteristics 
  of the quandles obtained from weighted graphs,
  as introduced in \cite{Saito-2025-HomogeneousQuandlesAbelianInner}.
  See also \cite{Furuki-2024-HomogeneousQuandlesAbelianInner}.

  \begin{prop}[{\cite{Saito-2025-HomogeneousQuandlesAbelianInner}}]
    Let $A$ be an abelian group, $V$ a finite set
    and $d: V \times V \to A$ a map 
    with $d(v, v) = 0$ for all $v \in V$.
    Then the following map $s: V \times A \to \Map(V \times A)$
    is a quandle structure on $V \times A$:
    \begin{equation*}
      s_{(v, a)}(w, b) = (w, d(v, w) + b).
    \end{equation*}
  \end{prop}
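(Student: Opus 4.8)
The plan is to verify directly the three quandle axioms for the proposed map $s$. First I would check the idempotency axiom~(1): for $x = (v,a)$ we compute $s_{(v,a)}(v,a) = (v, d(v,v)+a) = (v,a)$, which uses precisely the hypothesis $d(v,v) = 0$. Next, for axiom~(2), I would observe that $s_{(v,a)}$ preserves the first coordinate and acts on the second coordinate of $(w,b)$ by the translation $b \mapsto d(v,w)+b$; since translation by an element of the abelian group $A$ is a bijection with inverse $b \mapsto b - d(v,w)$, the map $s_{(v,a)}$ is a bijection of $V \times A$, with explicit inverse $(w,b) \mapsto (w, b - d(v,w))$.

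The only axiom requiring a genuine computation is the self-distributivity axiom~(3). I would fix $x = (v,a)$ and $y = (w,c)$, evaluate both sides at an arbitrary point $(u,b)$, and compare. On the left, $s_x \circ s_y(u,b) = s_x\bigl(u,\, d(w,u)+b\bigr) = \bigl(u,\, d(v,u)+d(w,u)+b\bigr)$. On the right, since $s_x(y) = (w, d(v,w)+c)$ has first coordinate $w$, I get $s_{s_x(y)} \circ s_x(u,b) = s_{s_x(y)}\bigl(u,\, d(v,u)+b\bigr) = \bigl(u,\, d(w,u)+d(v,u)+b\bigr)$. The two outputs coincide exactly because $A$ is abelian, so that $d(v,u)+d(w,u) = d(w,u)+d(v,u)$.

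I expect no real obstacle: the whole content sits in axiom~(3), and its verification rests entirely on the commutativity of $A$, which is therefore where the abelian hypothesis is essential. It is worth noting in the write-up that the second coordinates $a$ and $c$ of $x$ and $y$ never appear in the final comparison, since only the first coordinate $w$ of $s_x(y)$ is needed to form $s_{s_x(y)}$; this reflects the fact that the $A$-factor behaves as a torsor variable and explains why idempotency and bijectivity are immediate while self-distributivity reduces to a statement about $d$ alone.
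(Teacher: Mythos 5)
Your verification is correct: idempotency uses exactly $d(v,v)=0$, bijectivity follows from translation in $A$ fiberwise over the first coordinate, and self-distributivity reduces—via the key observation that $s_{(w,c)}$ depends only on $w$—to the commutativity of $A$, which is precisely where the abelian hypothesis enters. The paper itself states this proposition as a citation from \cite{Saito-2025-HomogeneousQuandlesAbelianInner} without reproducing a proof, and your direct axiom-by-axiom check is the standard argument for it, so nothing further is needed.
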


  The constructed quandle is called \emph{the quandle obtained from $(V, A, d)$},
  and denoted by $V \times_d A$.
  Note that $(V, A, d)$ is called an \emph{$A$-weighted graph}, 
  whose edges are weighted by elements in $A$.
  More precisely,
  $(V, A, d)$ is an oriented graph 
  with the vertex set $V$, 
  the edge set $E = \{(v,w) \mid v,w \in V, d(v,w) \neq 0\}$,
  and the weight of the edge $(v,w) \in E$ is given by $d(v,w)$.
  We express a weighted-edge $(v, w) \in E$
  by an arrow from $v$ to $w$ that is labeled by $d(v,w)$.

  Let us denote
  an $n$-point set by $V = V_n = \{v_1, \dots, v_n\}$
  for a positive integer $n$.
  For an $A$-weighted graph $(V, A, d)$,
  let us define \emph{the adjacency matrix} $D$ with respect to $(V, A, d)$
  by $D := (d(v_i, v_j))_{i,j \in \{1, \dots, n\}}$,
  and denote the $i$-th row vector of $D$ by $d_i$.
  
  \begin{rmk}
    For any $a, b \in A$ and any $v \in V_n$,
    the point symmetries satisfy $s_{(v, a)} = s_{(v, b)}$.
    In the following, 
    we denote the point symmetry at $(v, a) \in V_n \times_d A$ by $s_v$.
    Note that $s_{v_i}$ is described by $d_i$,
    which is the $i$-th row vector of the adjacency matrix $D$.
    Hence,
    $s_{v_i} = s_{v_j}$ as inner automorphisms of $V_n \times_d A$
    if and only if $d_i = d_j$.
    We can regard $\Inn{V_n \times_d A}$ as a subgroup of $A^n$
    by the map $\iota: \Inn{V_n \times_d A} \to A^n$ 
    defined by $\iota(s_{v_i})=d_i$.
    In the following we identify $\Inn{V_n \times_d A}$ with 
    its image of $\iota$.
  \end{rmk}

  \begin{rmk}
    It was proved in \cite{Saito-2025-HomogeneousQuandlesAbelianInner}
    that the inner automorphism group $\Inn{V \times_d A}$ is abelian,
    and every finite homogeneous quandle with abelian inner automorphism group
    can be constructed in this way.
    An $A$-weighted graph $(V, A, d)$ is said to be \emph{homogeneous}
    if there exists a graph isomorphism $f:(V,E) \to (V,E)$
    such that $d(f(v), f(w)) = d(v,w)$ for any $v,w \in V$.
    If an $A$-weighted graph $(V, A, d)$ is homogeneous,
    then $V_n \times_d A$ is a homogeneous quandle
    \cite[Theorem A]{Saito-2025-HomogeneousQuandlesAbelianInner}.
  \end{rmk}

  \begin{ex}
    The discrete sphere $DS^n$ is 
    isomorphic to the quandle $V_{n+1} \times_d (\ZZ / 2\ZZ)$,
    where the map $d: V_{n+1} \times V_{n+1} \to (\ZZ / 2\ZZ)$
    is defined by
    \begin{equation*}
      d(v,w) = \begin{cases*}
        1 & if $v \neq w$,\\
        0 & if $v = w$.
      \end{cases*}
    \end{equation*}
    The corresponding $A$-weighted graph is a perfect graph with $n+1$ vertexes,
    all of whose edges are weighted by $1 \in \ZZ/2\ZZ$.
    In the case $n = 3$,
    the corresponding $A$-weighted graph
    is described in \cref{exampleGraph},
    and 
    the corresponding adjacency matrix $D$ is 
    \begin{equation*}
      D = \begin{pmatrix}
        0 & 1 & 1 & 1 \\
        1 & 0 & 1 & 1 \\
        1 & 1 & 0 & 1 \\
        1 & 1 & 1 & 0 \\
      \end{pmatrix}.
    \end{equation*}
  \end{ex}

  \begin{figure}[h]
    \centering
    \includegraphics[keepaspectratio, scale=0.6]{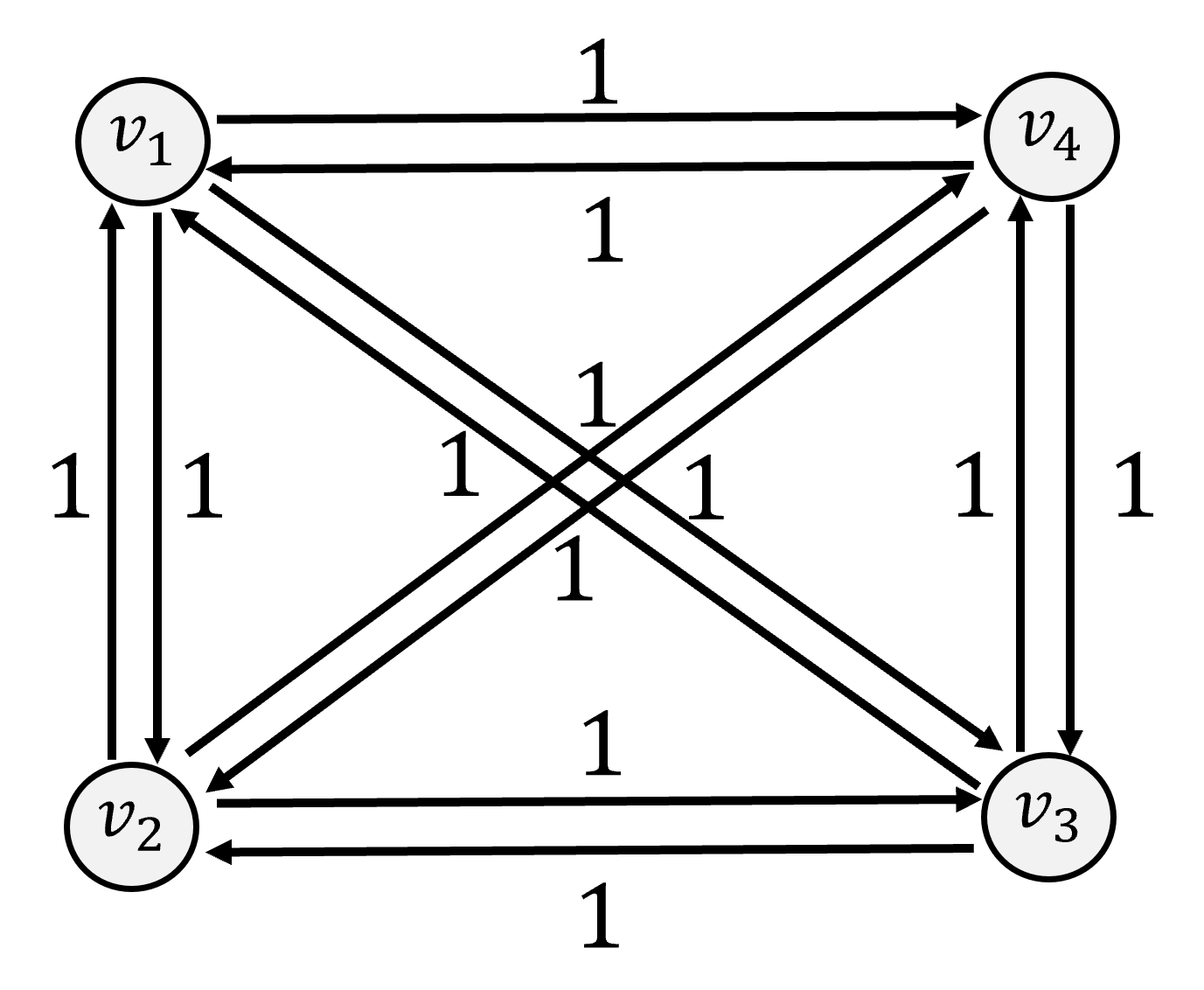}
    \caption{The $A$-weighted graph corresponding to 
    the discrete sphere $DS^3$}
    \label{exampleGraph}
  \end{figure}

  The next proposition is useful
  for calculating the displacement group 
  and the quandle Euler characteristic 
  of the quandle $V \times_d A$.
  \begin{prop}\label{graphEuler}
    Let $V_n \times_d A$ be the quandle obtained from 
    an $A$-weighted graph $(V, A, d)$,
    and $d_i$ be the $i$-th row vector of the adjacency matrix $D$.
    Then we have the following:
    \begin{description}
      \item[$(1)$] $\Dis{V_n \times_d A} =
      \left\{\sum_{i=0}^{n-1} k_i (d_i - d_{i+1}) \in A^n \mid k_i \in \ZZ \right\}$,

      \item[$(2)$] $\chi^\Qdle(V_n \times_d A) = \# A \cdot 
      \inf\left\{\#\{i \mid a_i = 0\} \mid (a_1,\dots, a_n) \in \Dis{V_n \times_d
       A}\right\}$.
    \end{description}
  \end{prop}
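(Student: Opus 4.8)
The plan is to push everything through the identification $\iota$ introduced in the remarks, which realizes $\Inn{V_n \times_d A}$ as a subgroup of the abelian group $A^n$, and then reduce both assertions to elementary computations there. The crucial preliminary is an explicit description of the action: if an inner automorphism corresponds under $\iota$ to a vector $a = (a_1, \dots, a_n) \in A^n$, then it sends $(v_j, b) \in V_n \times_d A$ to $(v_j, a_j + b)$. Indeed, the generator $s_{v_i}$ corresponds to the row vector $d_i = (d(v_i, v_1), \dots, d(v_i, v_n))$ and acts by $(v_j, b) \mapsto (v_j, d(v_i, v_j) + b)$; since $\Inn{V_n \times_d A}$ is abelian and composition of inner automorphisms corresponds to addition in $A^n$, this formula extends to an arbitrary $a$ by additivity in the $A$-coordinate.

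For part $(1)$, I would first note that $\iota(s_{v_i} \circ s_{v_j}^{-1}) = d_i - d_j$, so that under the identification $\Dis{V_n \times_d A}$ is precisely the subgroup of $A^n$ generated by the set of all differences $\{d_i - d_j \mid 1 \le i, j \le n\}$. The right-hand side of $(1)$ is, by definition, the subgroup of $A^n$ generated as a $\ZZ$-module by the consecutive differences $d_i - d_{i+1}$. The inclusion of the right-hand side into $\Dis{V_n \times_d A}$ is immediate since each consecutive difference is one of the generators. For the reverse inclusion I would use a telescoping identity: for $i < j$ one has $d_i - d_j = \sum_{l=i}^{j-1}(d_l - d_{l+1})$, while the cases $i > j$ and $i = j$ are handled by $d_i - d_j = -(d_j - d_i)$ and $d_i - d_i = 0$. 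Thus every generating difference lies in the $\ZZ$-span of the consecutive differences, and equality follows.

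For part $(2)$, I would combine the action formula with the definition of the quandle Euler characteristic. By the formula above, a point $(v_j, b)$ is fixed by $a = (a_1, \dots, a_n) \in \Dis{V_n \times_d A}$ if and only if $a_j = 0$; hence for each index $j$ with $a_j = 0$ all $\#A$ points of the form $(v_j, b)$ are fixed, and for $a_j \neq 0$ none of them are. This gives $\#\Fix{a, V_n \times_d A} = \#A \cdot \#\{j \mid a_j = 0\}$ for every $a \in \Dis{V_n \times_d A}$. Taking the infimum over such $a$ and pulling out the constant factor $\#A$ yields the asserted identity for $\chi^{\Qdle}(V_n \times_d A)$.

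All the computations are routine once the action formula is established. The only points requiring care are the bookkeeping in the telescoping step for $(1)$ and the verification that $\iota$ is a homomorphism converting composition into addition in $A^n$, which is exactly where the hypothesis that $\Inn{V_n \times_d A}$ is abelian is used; I expect this verification, rather than either final formula, to be the main thing to pin down cleanly.
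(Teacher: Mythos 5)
Your proposal is correct and follows essentially the same route as the paper: identify $\Dis{V_n \times_d A}$ inside $A^n$ via $\iota$, reduce the generating set of differences $d_i - d_j$ to consecutive differences by telescoping, and count fixed points via the observation that $a$ fixes $(v_j,b)$ iff $a_j = 0$, giving $\#\Fix{a, V_n \times_d A} = \#A \cdot \#\{j \mid a_j = 0\}$. (Your telescoping bound $\sum_{l=i}^{j-1}$ is in fact the correct indexing; the paper's displayed upper limit $j$ is a typo.)
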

  \begin{proof}
    We show $(1)$.
    Recall that the displacement group $G:= \Dis{V_n \times_d A}$ is 
    generated by 
    \[
      \{s_{v_i} \circ s_{v_j}^{-1} \mid v_i, v_j \in V_n\}
      = \{d_i - d_j \mid i, j \in \{1, \dots n\}\}.
    \]
    Since $(d_i -d_j) = -(d_j - d_i)$ for any $i, j$,
    and $d_i - d_j = \sum_{k=i}^{j}(d_k - d_{k+1})$ for $j>i$,
    we can replace the generating set 
    to $\{d_i - d_{i+1} \mid i \in \{1, \dots, n-1\}\}$.
    Therefore we obtain the assertion.

    We show $(2)$.
    An element $a = (a_1, \dots, a_n) \in \Dis{V_n \times_d A}$
    acts on $V_n \times_d A$ by $a \cdot (v_i,b) = (v_i, b + a_i)$.
    Hence the element $(v_i, b) \in V_n \times_d A$ is fixed by $a$
    if and only if $a_i=0$.
    Note that 
    if there exists an element $(v_i, b_0) \in V_n \times_d A$ fixed by $a$,
    then for any $b \in A$, 
    every element $(v_i,b) \in V_n \times_d A$ is fixed by $a$.
    Hence
    the number $\# A \cdot \#\{i \mid a_i = 0\}$ 
    is equal to the number of elements fixed by $a$.
    This completes the proof.
  \end{proof}

  Now we calculate quandle Euler characteristics
  of a concrete example of $(V,A,d)$.
  This example will be used 
  to construct a quandle satisfying 
  particular inequality
  (see \cref{freeunionEuler} and \cref{counterExampleFreeunion}).
  \begin{prop}\label{cycleQuandle}
    Let $n$ be a positive integer with $n > 1$,
    and consider $V_n = \{v_1, \dots, v_n\}$. 
    Let us define a map $d: V_n \times V_n \to \ZZ/ 2\ZZ$ by 
    \begin{equation*}
      d(v_i, v_j) = \begin{cases*}
        1 & if $i-j = 1$ in $\ZZ/ n\ZZ$,\\
        0 & otherwise.
      \end{cases*}
    \end{equation*}
    Then $C_n := V \times_d (\ZZ/ 2\ZZ)$ is a non-trivial homogeneous quandle
    and the quandle Euler characteristic is given by
    \begin{equation*}
      \chi^\Qdle(C_n) = \begin{cases*}
        2 & if $n$ is odd,\\
        0 & if $n$ is even.
      \end{cases*}
    \end{equation*}
  \end{prop}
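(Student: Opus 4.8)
The plan is to apply \cref{graphEuler} directly, since $C_n$ is by definition a quandle obtained from a $(\ZZ/2\ZZ)$-weighted graph. First I would write down the adjacency matrix $D$: the map $d$ places a $1$ in each entry $(i,j)$ with $i-j\equiv 1 \pmod n$ and $0$ elsewhere, so $D$ is the cyclic permutation matrix over $\ZZ/2\ZZ$. Its $i$-th row vector $d_i$ is the standard basis vector $e_{i-1}$ (indices mod $n$), i.e.\ $d_i$ has a single $1$ in the position just before the diagonal, cyclically. This makes the adjacency matrix concrete enough that the differences $d_i - d_{i+1}$ can be computed explicitly.

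**Computing the displacement group.**
Next I would use part $(1)$ of \cref{graphEuler}, which says $\Dis{C_n}$ is the subgroup of $(\ZZ/2\ZZ)^n$ generated by the vectors $d_i - d_{i+1}$ for $i \in \{1,\dots,n-1\}$. Since we are over $\ZZ/2\ZZ$, subtraction is addition, and each $d_i - d_{i+1} = e_{i-1} + e_i$ is a vector with exactly two nonzero entries in cyclically adjacent positions. The key structural fact is that these ``adjacent-pair'' vectors generate precisely the even-weight subgroup of $(\ZZ/2\ZZ)^n$, that is, all vectors $(a_1,\dots,a_n)$ with $\sum_i a_i = 0$ in $\ZZ/2\ZZ$. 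I would verify this by noting that any sum of the generators has even weight (closure), and conversely any even-weight vector is a sum of such adjacent transpositions, arguing as in the coin-pairing lemma used for $DS^n$. This identifies $\Dis{C_n}$ with the even-weight code of length $n$ over $\ZZ/2\ZZ$.

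**Extracting the Euler characteristic.**
Then I would apply part $(2)$ of \cref{graphEuler}: since $\#A = \#(\ZZ/2\ZZ) = 2$, we have
\begin{equation*}
  \chi^\Qdle(C_n) = 2 \cdot \inf\{\#\{i \mid a_i = 0\} \mid (a_1,\dots,a_n) \in \Dis{C_n}\}.
\end{equation*}
So the problem reduces to minimizing the number of zero coordinates over all even-weight vectors of length $n$, equivalently maximizing the Hamming weight subject to that weight being even. When $n$ is even, the all-ones vector $(1,\dots,1)$ has weight $n$ and lies in the code, giving zero vanishing coordinates and hence $\chi^\Qdle(C_n) = 0$. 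When $n$ is odd, the all-ones vector has odd weight and is excluded; the largest admissible even weight is $n-1$, realized by a vector with a single zero, forcing at least one zero coordinate, and no even-weight vector can have all coordinates nonzero. This yields the infimum $1$ and thus $\chi^\Qdle(C_n) = 2$.

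**Main obstacle and loose ends.**
The computation is mostly bookkeeping, so the main obstacle is making the generation claim airtight: I must confirm that the adjacent-pair vectors $e_{i-1}+e_i$ for $i=1,\dots,n-1$ generate the full even-weight subgroup, being careful about the cyclic indexing and about which $n-1$ of the $n$ cyclic pairs are actually taken. I expect that the $n-1$ chosen pairs suffice precisely because telescoping sums recover every ``distance'' between two coordinates, exactly the mechanism used in the $DS^n$ lemma. I would also separately confirm non-triviality of $C_n$ (immediate, since for $n>1$ some row $d_i$ is nonzero, so not all point symmetries are the identity) and homogeneity, which follows from the earlier remark because the cyclic rotation of $V_n$ is a weight-preserving graph automorphism of $(V,A,d)$.
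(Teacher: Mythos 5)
Your proposal is correct and follows essentially the same route as the paper: identify $d_i = e_{i-1}$, use \cref{graphEuler}~$(1)$ to recognize $\Dis{C_n}$ as the even-weight subgroup of $(\ZZ/2\ZZ)^n$, and then apply \cref{graphEuler}~$(2)$ with the all-ones vector (for $n$ even) or $(1,\dots,1,0)$ (for $n$ odd). The only difference is that you spell out the spanning argument for why the adjacent-pair vectors generate the full even-weight code, which the paper asserts without detail; this is a welcome tightening, not a different approach.
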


  \begin{figure}[h]
    \centering
    \includegraphics[keepaspectratio,scale=0.6]{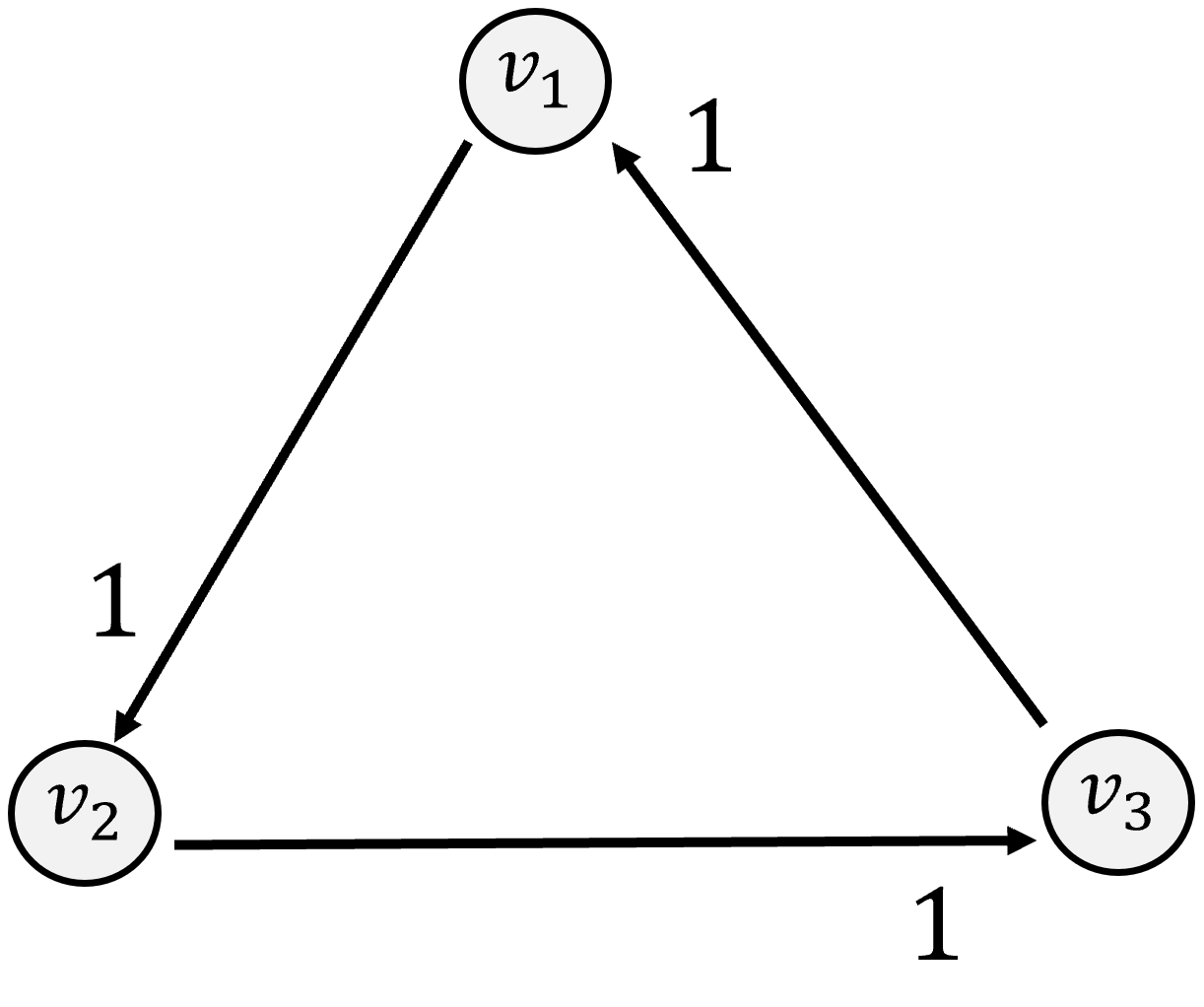}
    \caption{The $A$-weighted graph corresponding to 
    the quandle $C_3$.}
    \label{cycle3}
  \end{figure}

  \begin{proof}
    Since $d \neq 0$
    and the weighted graph $(V_n, \ZZ/2\ZZ, d)$ is homogeneous, 
    the quandle $C_n$ is non-trivial and homogeneous.
    In this case,
    the $i$-th row vector of the adjacency matrix is given by 
    \begin{equation*}
      d_i = e_{i-1}^T,
    \end{equation*}
    where $^T$ denotes the transpose.
    By \cref{graphEuler} $(1)$,
    we have
    \begin{equation*}
      \Dis{C_n} = 
      \left\{(a_1, \dots ,a_n) \in (\ZZ/2\ZZ)^n \mid a_k \in \ZZ/ 2\ZZ, \,\textstyle{\sum_{k=1}^{n}} a_k = 0\right\}.
    \end{equation*}
    If $n$ is even, then $(1,\dots, 1) \in \Dis{C_n}$ has no fixed points,
    and hence
    the quandle Euler characteristic is equal to $0$.
    If $n$ is odd, then $(1,\dots, 1, 0) \in \Dis{C_n}$
    and any element in $\Dis{C_n}$
    has at least one entry that is $0$.
    Hence the Euler characteristic is equal to $\#(\ZZ/2\ZZ) = 2$
    by \cref{graphEuler}~$(2)$.
    This completes the proof.
  \end{proof}

  The next proposition shows 
  that, given positive integer $n>1$,
  there exists a non-trivial quandle 
  whose Euler characteristic is equal to $n$.
  \begin{prop}
    Let $n > 1$ be a positive integer,
    and consider $V_2 = \{v_1, v_2\}$.
    Let us define a map $d: V_2 \times V_2 \to \ZZ/ n\ZZ$
    by 
    \begin{equation*}
      d(v_i, v_j) = \begin{cases*}
        1 & if $(i,j)=(1,2)$,\\
        0 & otherwise.
      \end{cases*}
    \end{equation*}
    Then $B_n := V \times_d \ZZ/ n\ZZ$ is a non-trivial quandle
    and the quandle Euler characteristic is given by
    \begin{equation*}
      \chi^\Qdle(B_n) = n.
    \end{equation*}
  \end{prop}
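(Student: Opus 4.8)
The plan is to apply \cref{graphEuler} directly, treating $B_n$ as the quandle obtained from the weighted graph $(V_2, \ZZ/n\ZZ, d)$ on the two vertices $v_1, v_2$. First I would dispose of non-triviality: since $n>1$, the weight $d(v_1,v_2)=1$ is nonzero in $\ZZ/n\ZZ$, so the point symmetry $s_{v_1}$ acts non-trivially (it sends $(v_2,b)$ to $(v_2,b+1)$), and hence $B_n$ is not a trivial quandle.

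Next I would record the adjacency matrix and its row vectors. From the definition of $d$, the adjacency matrix is $D = \begin{pmatrix} 0 & 1 \\ 0 & 0 \end{pmatrix}$, so the row vectors are $d_1 = (0,1)$ and $d_2 = (0,0)$. Applying \cref{graphEuler}~$(1)$ with the two-vertex set $V_2$, the displacement group is generated over $\ZZ$ by the single difference $d_1 - d_2 = (0,1)$, so
\[
  \Dis{B_n} = \{(0,k) \mid k \in \ZZ/n\ZZ\}.
\]

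Finally I would evaluate the infimum appearing in \cref{graphEuler}~$(2)$. For an element $(0,k) \in \Dis{B_n}$, the first coordinate is always $0$, while the second coordinate $k$ vanishes only when $k=0$; hence the number of zero coordinates is $2$ if $k=0$ and $1$ if $k \neq 0$. Since $n>1$ there exists a nonzero $k$, so the infimum of $\#\{i \mid a_i = 0\}$ over $\Dis{B_n}$ equals $1$. As $\#A = \#(\ZZ/n\ZZ) = n$, \cref{graphEuler}~$(2)$ then yields $\chi^\Qdle(B_n) = n \cdot 1 = n$.

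Because this is a direct specialization of \cref{graphEuler}, there is no serious obstacle. The only point requiring care is the clash between the two roles of the index: the graph here has exactly two vertices while the coefficient group $A = \ZZ/n\ZZ$ has order $n$, so one must invoke \cref{graphEuler} with vertex count $2$ (not $n$) and observe that the first coordinate of every displacement element is forced to be $0$, which is precisely what pins the infimum at $1$ and forces a full fiber of $n$ fixed points.
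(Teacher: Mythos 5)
Your proposal is correct and takes essentially the same route as the paper: both apply \cref{graphEuler} to the two-vertex weighted graph to get $\Dis{B_n} = \{(0,k) \mid k \in \ZZ/n\ZZ\}$ and then read off the Euler characteristic from the fact that the first entry is always $0$. If anything, your final step is slightly more careful than the paper's, since you explicitly verify that the infimum in \cref{graphEuler}~$(2)$ equals $1$ (rather than $2$) by exhibiting a nonzero $k$, which is where the hypothesis $n>1$ enters.
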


  \begin{figure}[h]
    \centering
    \includegraphics[keepaspectratio,scale=0.6]{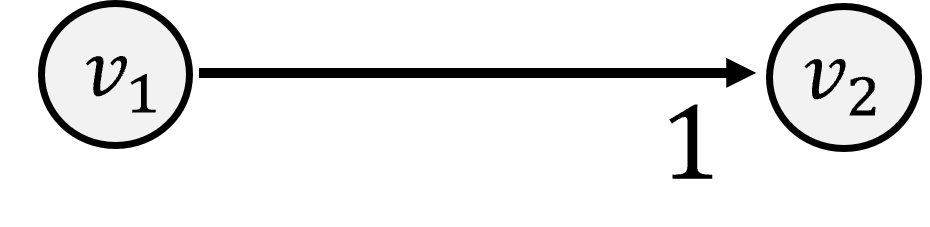}
    \caption{The $A$-weighted graph corresponding to 
    the quandle $B_n$.}
    \label{arc}
  \end{figure}

  \begin{proof}
    Since $d$ is not the zero-map, $B_n$ is non-trivial.
    In this case, 
    the displacement group satisfies
    \begin{equation*}
      \Dis{B_n} = \left\{(0, k) \in (\ZZ/ n\ZZ)^2 \mid k \in \ZZ/ n\ZZ \right\}
    \end{equation*}
    by \cref{graphEuler}.
    Since the first entry of any element in $W$ is $0$,
    the Euler characteristic of $B_n$ is equal to $\#(\ZZ/n\ZZ) = n$.
    This completes the proof.
  \end{proof}




\section{Properties of the quandle Euler characteristics}\label{Property}

In this section,
we prove several properties of 
the quandle Euler characteristics.
These properties can be regarded
as analogous to certain properties 
of topological Euler characteristics.

\subsection{The direct product of quandles}
Recall that the Euler characteristic of a product  
of CW complexes is equal to the direct product of 
the Euler characteristics of the individual components.
In this subsection,
we show that the same equality holds for the quandle Euler characteristic
of the product of quandles.

For quandles $(X_1, s^1)$ and $(X_2, s^2)$,
we can define a quandle structure $s$ on the direct product $X_1 \times X_2$
by setting $s_{(x_1,x_2)}:=s^1_{x_1} \times s^2_{x_2}$,
that is 
\begin{equation*}
	s_{(x_1,x_2)}(y_1,y_2) = (s^1_{x_1}(y_1), s^2_{x_2}(y_2)).
\end{equation*}
The quandle $(X_1 \times X_2, s)$ is called 
the \emph{direct product of quandles $(X_1,s^1)$ and $(X_2,s^2)$},
and for simplicity, we denote it as $X_1 \times X_2$.

\begin{lem}\label{prodGrp}
    Let $(X_1,s^1)$ and $(X_2,s^2)$ be quandles.
    Then
    \begin{description}
        \item[$(1)$]
        the group homomorphism
        $f: \Inn{X_1 \times X_2} \to \Inn{X_1} \times \Inn{X_2}$
        defined by $f(s_{(x_1,x_2)}) = (s^1_{x_1}, s^2_{x_2})$
        is injective,

        \item[$(2)$]
        $f(\Dis{X_1 \times X_2}) = \Dis{X_1} \times \Dis{X_2}$.
    \end{description}
\end{lem}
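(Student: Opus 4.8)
The plan is to establish the two parts by carefully tracking how the generating point symmetries behave under the candidate map $f$. The first observation is that $f$ is well-defined as a group homomorphism: one should check that the assignment $s_{(x_1,x_2)} \mapsto (s^1_{x_1}, s^2_{x_2})$ respects the relations defining $\Inn{X_1 \times X_2}$, which follows immediately from the product quandle structure $s_{(x_1,x_2)} = s^1_{x_1} \times s^2_{x_2}$, since composition in $\Inn{X_1 \times X_2}$ corresponds componentwise to composition in each factor. Concretely, a general element of $\Inn{X_1 \times X_2}$ is a word in the $s_{(x_1,x_2)}^{\pm 1}$, and applying such a word to a point $(y_1,y_2)$ produces $(w_1(y_1), w_2(y_2))$ where $w_1, w_2$ are the corresponding words in the $s^1$ and $s^2$ symmetries respectively; thus $f$ sends the word to $(w_1, w_2)$.

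For part $(1)$, injectivity, I would argue contrapositively: suppose $f(g) = (\id_{X_1}, \id_{X_2})$ for some $g \in \Inn{X_1 \times X_2}$. Writing $g$ as a word $w$ in the generators, the image $f(g) = (w_1, w_2)$ being the identity in $\Inn{X_1} \times \Inn{X_2}$ forces $w_1 = \id_{X_1}$ and $w_2 = \id_{X_2}$ as elements of the respective inner automorphism groups. But then for any $(y_1, y_2) \in X_1 \times X_2$ we have $g(y_1, y_2) = (w_1(y_1), w_2(y_2)) = (y_1, y_2)$, so $g$ is the identity automorphism of $X_1 \times X_2$, hence the identity element of $\Inn{X_1 \times X_2}$. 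This shows $\ker f$ is trivial.

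For part $(2)$, I would compute $f$ on the generators of $\Dis{X_1 \times X_2}$ directly. The displacement group is generated by elements of the form $s_{(x_1,x_2)} \circ s_{(y_1,y_2)}^{-1}$, and
\begin{equation*}
  f\!\left(s_{(x_1,x_2)} \circ s_{(y_1,y_2)}^{-1}\right) = \left(s^1_{x_1} \circ (s^1_{y_1})^{-1},\; s^2_{x_2} \circ (s^2_{y_2})^{-1}\right).
\end{equation*}
The inclusion $f(\Dis{X_1 \times X_2}) \subseteq \Dis{X_1} \times \Dis{X_2}$ is then immediate, since each component lies in the respective displacement group. The reverse inclusion is where one must be slightly careful: a general element of $\Dis{X_1} \times \Dis{X_2}$ has the form $(g_1, g_2)$ with $g_1 \in \Dis{X_1}$ and $g_2 \in \Dis{X_2}$ chosen \emph{independently}, whereas the generators above couple the two factors through the shared pair of points.

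**The hard part will be** producing, from independent elements $g_1, g_2$, a preimage in $\Dis{X_1 \times X_2}$. The key trick is to realize $g_1$ and the identity, and separately the identity and $g_2$, as images of displacement elements, and then multiply. To get $(g_1, \id)$, I would take a generator $s^1_{x_1} \circ (s^1_{y_1})^{-1}$ of $\Dis{X_1}$ and lift it using the \emph{same} second coordinate in both symmetries, i.e. $s_{(x_1, z)} \circ s_{(y_1, z)}^{-1}$ for any fixed $z \in X_2$, whose image is $\left(s^1_{x_1} \circ (s^1_{y_1})^{-1},\, s^2_z \circ (s^2_z)^{-1}\right) = \left(s^1_{x_1} \circ (s^1_{y_1})^{-1}, \id\right)$. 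Taking products of such lifts, any element $(g_1, \id)$ with $g_1 \in \Dis{X_1}$ arises as an image; symmetrically for $(\id, g_2)$. Since $(g_1, g_2) = (g_1, \id)(\id, g_2)$ and $f$ is a homomorphism, every element of $\Dis{X_1} \times \Dis{X_2}$ is hit, giving the reverse inclusion and completing the equality.
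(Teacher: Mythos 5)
Your proposal is correct and takes essentially the same route as the paper's proof: injectivity follows from the observation that a word in the generators acts componentwise, so a trivial image forces the map itself to be trivial, and the inclusion $f(\Dis{X_1\times X_2}) \supseteq \Dis{X_1}\times\Dis{X_2}$ is obtained by the same coordinate-repetition trick, lifting $(s^1_{x_1}\circ(s^1_{y_1})^{-1},\id)$ to $s_{(x_1,z)}\circ s_{(y_1,z)}^{-1}$ (the paper writes this as $(s^1_{x_1},s^2_{x_2})\cdot(s^1_{y_1},s^2_{x_2})^{-1}$). Your extra attention to the well-definedness of $f$ on words is a point the paper leaves implicit, but it is handled correctly and does not change the argument.
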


\begin{proof}
  We show $(1)$, that is $\ker(f) = \{1\}$.
  Let us take $g \in \ker(f)$. 
  Then there exist 
  $x_1, \dots, x_n \in X_1 \times X_2$
  and $\varepsilon_i \in \{\pm 1\}$ such that 
  $g=s_{x_1}^{\varepsilon_1} 
  \circ \cdots \circ s_{x_n}^{\varepsilon_n}$.
  Here, it follows
  $s_{x_i} = s_{x_i^1} \times s_{x_i^2}$ for each 
  $x_i = (x_i^1, x_i^2) \in X_1 \times X_2$
  by the definition of the point symmetries of the product quandle.
  By the definition of $f$, one has
  \begin{equation*}
    f(g) = ((s^1_{x_1^1})^{\varepsilon_1} \circ \cdots \circ (s^1_{x_n^1})^{\varepsilon_n}, 
    (s^2_{x_1^2})^{\varepsilon_1} \circ \cdots \circ (s^2_{x_n^2})^{\varepsilon_n})
    =: (g_1,g_2) \in \Inn{X_1} \times \Inn{X_2}.
  \end{equation*}
  Since $g \in \ker(f)$, 
  we have $g_i$ is trivial for $i \in \{1, 2\}$.
  On the other hand, it satisfies
  \begin{align*}
    g_1 \times g_2 
    &= \left((s^1_{x_1^1})^{\varepsilon_1} \circ \cdots \circ (s^1_{x_n^1})^{\varepsilon_n}\right) \times
    \left((s^2_{x_1^2})^{\varepsilon_1} \circ \cdots \circ (s^2_{x_n^2})^{\varepsilon_n}\right)\\
    &= \left((s^1_{x_1^1})^{\varepsilon_1} \times (s^2_{x_1^2})^{\varepsilon_1}\right)
    \circ \cdots \circ
    \left((s^1_{x_n^1})^{\varepsilon_n} \times (s^2_{x_n^2})^{\varepsilon_n}\right)\\
    &= s_{x_1}^{\varepsilon_1}
    \circ \cdots \circ
    s_{x_n}^{\varepsilon_n} = g.
  \end{align*}
  This yields that $g = 1$.
  Therefore the homomorphism $f$ is injective as desired.

  We show $(2)$.
  First, we demonstrate that 
  $f(\Dis{X_1 \times X_2}) \subset \Dis{X_1} \times \Dis{X_2}$.
  The left-hand side is generated by 
  \begin{equation*}
    D = \{(s^1_{x_1} \circ (s^1_{y_1})^{-1}, s^2_{x_2} \circ (s^2_{y_2})^{-1})
    \mid (x_1,x_2), (y_1, y_2) \in X_1 \times X_2\}.
  \end{equation*}
  Since $s_{x_k}^i \circ (s_{y_k}^i)^{-1} \in \Dis{X_i}$,
  the right-hand side contains $D$,
  which shows $(\subset)$.
  Next, we prove the converse inclusion $(\supset)$.
  The right-hand side $\Dis{X_1} \times \Dis{X_2}$ is generated by
  \begin{equation*}
    D' = \{(s^1_{x_1} \circ (s^1_{y_1})^{-1}, 1) \mid x_1, y_1 \in X_1\}
    \cup \{(1, s^2_{x_2} \circ (s^2_{y_2})^{-1}) \mid x_2, y_2 \in X_2\}.
  \end{equation*}
  The left-hand side contains $D'$.
  In fact, we have
  \begin{align*}
    (s^1_{x_1},s^2_{x_2}) \cdot (s^1_{y_1}, s^2_{x_2})^{-1}
    &= (s^1_{x_1} \circ (s^1_{y_1})^{-1}, 1),\\
    (s^1_{x_1}, s^2_{x_2}) \cdot (s^1_{x_1}, s^2_{y_2})^{-1}
    &= (1, s^2_{x_2} \circ (s^2_{y_2})^{-1}).
  \end{align*}
  Since both belong to the left-hand side,
  it follows $(\supset)$,
  which completes the proof.
\end{proof}

By this lemma,
the displacement group of the direct product quandle is 
isomorphic to the product group of the displacement groups of the individual components.
Using this property, we obtain the following result.

\begin{thm}\label{directProductEuler}
  The quandle Euler characteristic of the direct product of 
  quandles $(X_1,s^1)$ and $(X_2,s^2)$ satisfies
  $\chi^{\Qdle}(X_1 \times X_2) 
  = \chi^{\Qdle}(X_1) \cdot \chi^{\Qdle}(X_2)$.
\end{thm}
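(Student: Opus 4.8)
The plan is to reduce the product statement to a fixed-point count using \cref{prodGrp}, which identifies $\Dis{X_1 \times X_2}$ with $\Dis{X_1} \times \Dis{X_2}$ via the injective homomorphism $f$. The key observation is that the fixed-point sets multiply: for an element $g \in \Dis{X_1 \times X_2}$ with $f(g) = (g_1, g_2) \in \Dis{X_1} \times \Dis{X_2}$, a point $(x_1, x_2)$ is fixed by $g$ if and only if $x_1$ is fixed by $g_1$ and $x_2$ is fixed by $g_2$. This follows directly from the componentwise definition of the product action, since $g$ acts as $g_1$ on the first factor and $g_2$ on the second. Hence
\begin{equation*}
  \#\Fix{g, X_1 \times X_2} = \#\Fix{g_1, X_1} \cdot \#\Fix{g_2, X_2}.
\end{equation*}

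Given this multiplicativity, I would prove the two inequalities separately. For $\chi^\Qdle(X_1 \times X_2) \leq \chi^\Qdle(X_1) \cdot \chi^\Qdle(X_2)$, I would take elements $g_1 \in \Dis{X_1}$ and $g_2 \in \Dis{X_2}$ that nearly achieve the respective infima (or achieve them, in the finite case), and use surjectivity of $f$ onto $\Dis{X_1} \times \Dis{X_2}$ from \cref{prodGrp}(2) to lift $(g_1, g_2)$ to some $g \in \Dis{X_1 \times X_2}$. The displayed product formula then bounds the infimum on the left by the product of the chosen counts. For the reverse inequality $\chi^\Qdle(X_1 \times X_2) \geq \chi^\Qdle(X_1) \cdot \chi^\Qdle(X_2)$, I would take an arbitrary $g \in \Dis{X_1 \times X_2}$, write $f(g) = (g_1, g_2)$, and note that each factor count is bounded below by the corresponding infimum, so $\#\Fix{g, X_1 \times X_2} \geq \chi^\Qdle(X_1) \cdot \chi^\Qdle(X_2)$; taking the infimum over $g$ gives the claim.

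The main subtlety is the handling of the infimum when the quandles may be infinite, since the values $\chi^\Qdle(X_i)$ are defined as infima that need not be attained, and one must be careful that the product of two infima equals the infimum of the products. Because all the quantities involved are nonnegative integers (or $+\infty$), the infima are in fact attained whenever finite, so this concern largely evaporates: if $\chi^\Qdle(X_i)$ is finite it equals some $\#\Fix{g_i, X_i}$, and the lift via $f$ realizes the product exactly. The only case requiring a word of care is when one factor has infinite fixed-point sets for every group element; but then both sides are handled uniformly by the nonnegative-integer (extended) arithmetic, and the two inequalities close as above. I expect the multiplicativity of $\Fix$ under the product action and the surjectivity from \cref{prodGrp}(2) to be the real content, with the infimum bookkeeping being routine.
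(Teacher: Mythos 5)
Your proposal is correct and follows essentially the same route as the paper: both identify $\Dis{X_1 \times X_2}$ with $\Dis{X_1} \times \Dis{X_2}$ via \cref{prodGrp} and then use the multiplicativity $\#\Fix{(g_1,g_2), X_1 \times X_2} = \#\Fix{g_1, X_1} \cdot \#\Fix{g_2, X_2}$ to transfer the infimum. If anything, your two-inequality bookkeeping is slightly more careful than the paper's one-line ``attains the infimum if and only if'' conclusion, which glosses over exactly the attainment and infinite-value issues you address.
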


\begin{proof}
  We identify the group $\Dis{X_1 \times X_2}$ 
  with $\Dis{X_1} \times \Dis{X_2}$ by \cref{prodGrp}.
  Hence any element $g \in \Dis{X_1}$ can be written
  as $g = (g_1, g_2)$ with $g_1 \in \Dis{X_1}$ and $g_2 \in \Dis{X_2}$.
  Since the action is given by $(g_1, g_2). (x_1, x_2) = (g_1(x_1), g_2(x_2))$,
  it follows that the set of fixed points satisfies
  \begin{equation*}
    \Fix{(g_1, g_2), X_1 \times X_2} = \Fix{g_1, X_1} \times \Fix{g_2, X_2}.
  \end{equation*}
  Therefore, $(g_1, g_2)$ attains the infimum of $\# \Fix{(g_1, g_2), X_1 \times X_2}$
  if and only if both of $g_1$ and $g_2$ attain the infimums of $\# \Fix{g_1, X_1}$
  and $\# \Fix{g_2, X_2}$, respectively.
  This completes the proof of the desired equality.
\end{proof}

As an application of \cref{directProductEuler},
one can calculate the quandle Euler characteristics of the discrete tori.
As seen in \cref{dihedralQdle},
the dihedral quandles can be regarded as 
discrete subquandles of the circle $S^1$.
Thus,
we define the discrete tori as follows:
\begin{dfn}\label{DiscreteTorusDef}
  For a positive integer $n$
  and an integer vector 
  $u = (m_1, \dots, m_n) \in (\ZZ_{>0})^n$,
  the \emph{discrete torus} $DT^n_u$ is defined by 
  \[
    DT^n_u := \prod_{k=1}^n R_{m_k}.
  \]
\end{dfn}
According to the classification given in \cite{Ishihara-2016-FlatConnectedFiniteQuandles},
a flat connected finite quandle is a discrete torus.
Recall that a compact connected flat Riemannian symmetric space 
is just a flat torus $T^n = (S^1)^n$,
and its topological Euler characteristic is equal to $0$.
The Euler characteristic of a discrete torus
has the same property:
\begin{cor}\label{DiscreteTorusEuler}
  For any positive integer $n$ 
  and any $u = (m_1, \dots, m_n) \in (\ZZ_{>0})^n$ with $m_i > 2$,
  the quandle Euler characteristics of the discrete torus $DT^n_u$
  satisfies $\chi^\Qdle(DT^n_u) = \chi^\Top(T^n) = 0$.
\end{cor}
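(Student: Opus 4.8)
The plan is to combine two earlier results: \cref{DiscreteTorusEuler} is an immediate consequence of \cref{directProductEuler} (the product formula) together with \cref{dihedralQdle}, which computes the quandle Euler characteristic of each dihedral factor. First I would recall from \cref{DiscreteTorusDef} that $DT^n_u = \prod_{k=1}^n R_{m_k}$ is by definition the direct product of the dihedral quandles $R_{m_k}$. The hypothesis $m_i > 2$ for all $i$ is exactly what guarantees, via \cref{dihedralQdle}, that each factor $R_{m_k}$ is a non-trivial Alexander quandle, and hence that $\chi^\Qdle(R_{m_k}) = 0$.

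Next I would apply \cref{directProductEuler} inductively. The product formula gives $\chi^\Qdle(X_1 \times X_2) = \chi^\Qdle(X_1)\cdot\chi^\Qdle(X_2)$ for two quandles; iterating this over the $n$ factors yields
\begin{equation*}
  \chi^\Qdle(DT^n_u) = \prod_{k=1}^n \chi^\Qdle(R_{m_k}).
\end{equation*}
Since every factor on the right vanishes (indeed even a single factor suffices, as the product of non-negative integers is zero once any one of them is), the whole product is $0$. Finally I would note that the topological Euler characteristic of the flat torus $T^n$ is $0$, so $\chi^\Qdle(DT^n_u) = \chi^\Top(T^n) = 0$, matching the continuous analogue and completing the argument.

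There is essentially no obstacle here, since this is a corollary assembled from results already in hand. The only points requiring a word of care are, first, that the inductive application of the binary product formula is legitimate (which is routine, as the direct product of quandles is associative up to the natural isomorphism, and both $\Dis{-}$ and the fixed-point count respect this), and second, that the hypothesis $m_i > 2$ is genuinely needed: without it some factor $R_{m_k}$ could be trivial with $\chi^\Qdle(R_{m_k}) = \# R_{m_k} > 0$, so \cref{dihedralQdle} would not force a zero factor. Under the stated hypothesis, however, each factor contributes $0$ and the conclusion follows at once.
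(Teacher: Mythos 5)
Your proposal is correct and follows essentially the same route as the paper: both deduce from \cref{dihedralQdle} that each non-trivial factor $R_{m_k}$ (guaranteed by $m_k > 2$) has vanishing quandle Euler characteristic, and then apply the product formula of \cref{directProductEuler} (iterated over the $n$ factors) to conclude $\chi^\Qdle(DT^n_u) = 0 = \chi^\Top(T^n)$. Your added remarks on the legitimacy of the inductive application and the necessity of the hypothesis $m_i > 2$ are sound but not needed beyond what the paper's proof already does.
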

\begin{proof}
  Since $m_k>2$, any component $R_{m_k}$ is non-trivial.
  As seen in \cref{dihedralQdle},
  the quandle Euler characteristics of $R_{m_k}$ is equal to $0$.
  By applying \cref{directProductEuler},
  we have the quandle Euler characteristics of discrete torus
  is equal to $0$, which completes of proof.
\end{proof}

\subsection{The interaction-free union of quandles}
Recall that the Euler characteristic of a disjoint union 
of topological spaces is equal to the sum of 
the Euler characteristics of the individual components.
In this subsection,
we provide a comparable inequality for the Euler characteristics 
of the interaction-free union of quandles.

For two quandles $(X_1,s^1)$ and $(X_2,s^2)$,
we can define a quandle structure $s$ on the disjoint union $X_1 \sqcup X_2$
by setting 
\begin{equation*}
	s_x(y)=
	\begin{cases*}
		y & if $\{x,y\} \not\subset X_0 , X_1$,\\
		s_x^i(y) & if $\{x,y\} \subset X_i$.
	\end{cases*}
\end{equation*}
The quandle $(X_1 \sqcup X_2, s)$ is called the
\emph{interaction-free union of quandles $(X_1,s^1)$ and $(X_2,s^2)$},
and 
for simplicity,
we denote it by $X_1 \freeunion X_2$.
Note that the natural inclusion map 
$\iota_i: X_i \to X_1 \freeunion X_2$
is an injective quandle homomorphism
and induces a group homomorphism 
$\iota_i: \Inn{X_i} \to \Inn{X_1 \freeunion X_2}$.
\begin{lem}\label{unionGrp}
  Let $(X_1,s^1)$ and $(X_2,s^2)$ be quandles.
  Let us define the map 
  $\iota: \Inn{X_1} \times \Inn{X_2} \to \Inn{X_1 \freeunion X_2}$ by
  $\iota(g_1, g_2) = \iota_1(g_1)\iota_2(g_2)$.
  Then,
  \begin{enumerate}
    \item[$(1)$]\label{unionInn}
    the map $\iota: \Inn{X_1} \times \Inn{X_2} \to \Inn{X_1 \freeunion X_2}$
    is a group isomorphism,

    \item[$(2)$]\label{unionDis}
    $\iota(\Dis{X_1} \times \Dis{X_2}) \subset \Dis{X_1 \freeunion X_2}$.
  \end{enumerate}
\end{lem}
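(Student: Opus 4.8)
The plan is to pin down how inner automorphisms of the union act across the two components, and then read off the algebraic statements from this structural description. First I would record the observation underlying everything: if $x \in X_i$, then the point symmetry $s_x$ of $X_1 \freeunion X_2$ restricts to $s_x^i$ on $X_i$ and fixes the complementary component pointwise. Consequently every inner automorphism of $X_1 \freeunion X_2$ preserves the partition $X_1 \sqcup X_2$, and $\iota_i(g)$ acts as $g$ on $X_i$ and as the identity on the other component. This is the fact I would isolate as a preliminary claim, since both parts of the lemma rest on it (and it also justifies that each $\iota_i$ is a well-defined group homomorphism).

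For $(1)$, the key step is the elementwise commutativity of $\iota_1(\Inn{X_1})$ and $\iota_2(\Inn{X_2})$: since $\iota_1(g_1)$ is supported on $X_1$ and $\iota_2(g_2)$ on $X_2$, the two maps commute, which is exactly what makes $\iota$ a group homomorphism out of the direct product. Surjectivity is immediate, because each generator $s_x$ of $\Inn{X_1 \freeunion X_2}$ equals $\iota_1(s_x^1)$ or $\iota_2(s_x^2)$ according to which component contains $x$, so every generator lies in the image. For injectivity I would introduce the restriction homomorphism $r \colon \Inn{X_1 \freeunion X_2} \to \Inn{X_1} \times \Inn{X_2}$, $r(\varphi) = (\varphi|_{X_1}, \varphi|_{X_2})$, which is well defined by the preliminary claim, and observe that $r \circ \iota = \id$; alternatively, if $\iota(g_1, g_2) = \id$ then restricting to $X_1$ forces $g_1 = 1$ and restricting to $X_2$ forces $g_2 = 1$.

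For $(2)$ it suffices to check the inclusion on a generating set of $\Dis{X_1} \times \Dis{X_2}$. A generator of the form $(s_x^1 \circ (s_y^1)^{-1}, 1)$ with $x, y \in X_1$ maps under $\iota$ to $\iota_1(s_x^1 \circ (s_y^1)^{-1}) = s_x \circ s_y^{-1}$, which is a generator of $\Dis{X_1 \freeunion X_2}$; symmetrically for the generators coming from $X_2$. Since these images generate $\iota(\Dis{X_1} \times \Dis{X_2})$, the inclusion follows, and I note that one should not expect equality here, which is consistent with the strict inequality appearing later in \cref{freeunionEuler}.

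The main obstacle is really the preliminary structural claim, namely verifying that every inner automorphism of the union respects the decomposition into components and acts as the identity off its own component. This requires carefully tracking the cross terms in the definition of $s$, in particular that $s_x(y) = y$ whenever $x$ and $y$ lie in different components; once this \emph{disjoint support} picture is secured, the commutativity in $(1)$ and the generator bookkeeping in $(2)$ are routine.
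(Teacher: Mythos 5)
Your proposal is correct and follows essentially the same route as the paper's proof: commutativity of the disjointly supported point symmetries gives the homomorphism property, surjectivity is checked on the generators $s_x$, injectivity follows by restricting a kernel element to each component, and part $(2)$ is the same generator bookkeeping sending $(s^1_{x_1}\circ (s^1_{y_1})^{-1},1)$ to $s_{x_1}\circ s_{y_1}^{-1}\in\Dis{X_1\freeunion X_2}$. The only cosmetic difference is that you isolate the disjoint-support fact as an explicit preliminary claim (and offer the retraction $r$ with $r\circ\iota=\id$ as an alternative for injectivity), whereas the paper uses this fact implicitly.
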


\begin{proof}
  We show $(1)$,
  that is, $\iota$ is a group homomorphism and is a bijection.
  First, we show that $\iota$ is a group homomorphism.
  For $x_1 \in X_1$ and $x_2 \in X_2$,
  the maps $s_{x_1}$ and $s_{x_2}$ 
  are commutative in $\Inn{X_1 \freeunion X_2}$
  by the definition of the interaction-free union.
  Thus,
  an element in $\iota_1(X_1)$ and an element in $\iota_2(X_2)$ are commutative.
  Hence for $(g_1, g_2), (h_1, h_2) \in \Inn{X_1} \times \Inn{X_2}$,
  we have
  \begin{align*}
    \iota((g_1, g_2) (h_1, h_2)) 
    &= \iota(g_1 h_1, g_2 h_2)\\
    &= \iota_1(g_1 h_1) \iota_2(g_2 h_2)\\
    &= \iota_1(g_1) \iota_1(h_1) \iota_2(g_2) \iota_2(h_2)\\
    &= \iota_1(g_1) \iota_2(g_2) \iota_1(h_1) \iota_2(h_2)\\
    &= \iota(g_1,g_2) \iota(h_1, h_2).
  \end{align*}
  Therefore
  the map $\iota$ is a group homomorphism.
  Next, we show that $\iota$ is surjective.
  To prove this,
  let us consider
  \[
    S := \{(s^1_{x_1},1) \mid x_1 \in X_1\} \sqcup \{(1,s^2_{x_2}) \mid x_2 \in X_2\},
  \]
  which is a generating set of $\Inn{X_1} \times \Inn{X_2}$.
  Note that $\iota(s_{x_1}^1,1) = s_{x_1}$ and $\iota(1,s_{x_2}^2) = s_{x_2}$.
  Then the image $\iota(S)$ satisfies
  \begin{equation*}
    \iota(S) = \{s_x \in \Inn{X_1 \freeunion X_2} \mid x \in X_1 \sqcup X_2\},
  \end{equation*}
  which is a generating set 
  of $\Inn{X_1 \freeunion X_2}$.
  Thus, the group homomorphism $\iota$ is surjective.
  Lastly, we show that $\iota$ is injective,
  that is $\ker \iota = \{1\}$.
  Let us take $(g_1, g_2) \in \ker(\iota)$, 
  where $g_1 \in \Inn{X_1}$ and $g_2 \in \Inn{X_2}$. 
  Then the map $\iota (g_1, g_2)=\iota_1(g_1) \iota_2(g_2)$ 
  acts trivially on $X_1 \freeunion X_2$.
  In particular,
  the element $\iota_1(g_1) = \iota(g_1, g_2) \iota_2(g_2)^{-1}$ 
  acts trivially on $X_1$.
  Thus we conclude that $g_1$ is trivial,
  and similarly we conclude that $g_2$ is trivial.
  Therefore,
  we have $(g_1, g_2) = (1, 1)$ as desired.

  We show $(2)$.
  Let us consider \[
    S':= \{(s^1_{x_1} \circ (s^1_{y_1})^{-1},1) \mid x_1, y_1 \in X_1\} 
    \sqcup \{(1,s^2_{x_2}\circ (s^2_{y_2})^{-1}) \mid x_2, y_2 \in X_2\},
  \]
  which is a generating set of $\Dis{X_1} \times \Dis{X_2}$.
  Then the image $\iota(S')$ is given by 
  \[
    \iota(S') = 
      \{s_{x_1} \circ (s_{y_1})^{-1} \mid x_1, y_1 \in X_1\} 
    \sqcup \{s_{x_2}\circ (s_{y_2})^{-1} \mid x_2, y_2 \in X_2\}
    ,
  \]
  which is a subset of the group $\Dis{X_1 \freeunion X_2}$.
  Since the group $\iota(\Dis{X_1} \times \Dis{X_2})$
  is generated by $\iota(S')$,
  we have $(2)$, which completes the proof.
\end{proof}
By this lemma,
we can regard 
the product group of the displacement groups of the individual components
as a subgroup of the displacement group of the interaction-free union quandle.
Using this property,
we obtain the following result.

\begin{thm}\label{freeunionEuler}
  Let $(X_1, s^1)$ and $(X_2, s^2)$ be quandles.
  Then, the quandle Euler characteristic of the interaction-free union 
  $X_1 \freeunion X_2$ satisfies
  \begin{equation*}
    \chi^{\Qdle}(X_1 \freeunion X_2) \leq \chi^{\Qdle}(X_1) + \chi^{\Qdle}(X_2).
  \end{equation*}
\end{thm}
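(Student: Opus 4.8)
The plan is to exploit \cref{unionGrp}, which identifies $\Inn{X_1} \times \Inn{X_2}$ with $\Inn{X_1 \freeunion X_2}$ via $\iota$ and embeds the product of displacement groups as a subgroup $\iota(\Dis{X_1} \times \Dis{X_2}) \subset \Dis{X_1 \freeunion X_2}$. The strategy is to restrict the defining infimum for $\chi^{\Qdle}(X_1 \freeunion X_2)$ to this subgroup and to compute the fixed-point counts of elements coming from the product.

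First I would analyze how an element $\iota(g_1,g_2) = \iota_1(g_1)\iota_2(g_2)$ acts on the underlying set $X_1 \sqcup X_2$. By the definition of the interaction-free union, each point symmetry coming from $X_1$ fixes every point of $X_2$, and vice versa; hence $\iota_1(g_1)$ acts as $g_1$ on $X_1$ and trivially on $X_2$, while $\iota_2(g_2)$ acts as $g_2$ on $X_2$ and trivially on $X_1$. Consequently the action of $\iota(g_1, g_2)$ restricts to $g_1$ on $X_1$ and to $g_2$ on $X_2$, so that
\[
  \Fix{\iota(g_1,g_2), X_1 \freeunion X_2} = \Fix{g_1, X_1} \sqcup \Fix{g_2, X_2},
\]
and therefore $\# \Fix{\iota(g_1,g_2), X_1 \freeunion X_2} = \# \Fix{g_1, X_1} + \# \Fix{g_2, X_2}$.

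Next I would pass to the infimum. Since $\iota(\Dis{X_1} \times \Dis{X_2})$ is a subgroup of $\Dis{X_1 \freeunion X_2}$, the infimum over the full displacement group is at most the infimum over this subgroup. Because $g_1$ and $g_2$ range independently in the product, the latter infimum splits as a sum, giving
\[
  \chi^{\Qdle}(X_1 \freeunion X_2) \leq \inf_{g_1 \in \Dis{X_1}} \# \Fix{g_1, X_1} + \inf_{g_2 \in \Dis{X_2}} \# \Fix{g_2, X_2} = \chi^{\Qdle}(X_1) + \chi^{\Qdle}(X_2),
\]
which is the desired inequality.

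The only genuinely delicate step is the fixed-point computation, and even this reduces to the observation that the two families of point symmetries do not interact, so I expect no real obstacle. The reason one obtains only an inequality here—rather than the equality enjoyed by topological Euler characteristics under disjoint union—is that $\Dis{X_1 \freeunion X_2}$ can be strictly larger than $\iota(\Dis{X_1} \times \Dis{X_2})$: an element mixing the two factors may fix fewer points than any element of the product subgroup, which is exactly the phenomenon that the promised \cref{counterExampleFreeunion} must exhibit.
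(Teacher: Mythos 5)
Your proposal is correct and follows essentially the same route as the paper's proof: both invoke \cref{unionGrp} to embed $\Dis{X_1} \times \Dis{X_2}$ into $\Dis{X_1 \freeunion X_2}$, observe that $\iota(g_1,g_2)$ has fixed-point set $\Fix{g_1,X_1} \sqcup \Fix{g_2,X_2}$, and then restrict the defining infimum to this subgroup. The only cosmetic difference is that the paper picks elements $g_i$ attaining the infimum in each factor, whereas you split the infimum over the product directly; both are valid.
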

\begin{proof}
  We regard $\Dis{X_1} \times \Dis{X_2}$
  as a subgroup of 
  $\Dis{X_1 \freeunion X_2}$
  by the map $\iota$ in \cref{unionGrp}.
  For each $i \in \{1,2\}$,
  there exists $g_i \in \Dis{X_i}$ 
  such that 
  \begin{equation*}
      \#\Fix{g_i, X_i} = \chi^\Qdle(X_i).
  \end{equation*}
  Then $g := g_1g_2$ is regarded as an element in $\Dis{X_1 \freeunion X_2}$.
  Since $g_i$ acts trivially on the other component,
  it follows that the set of fixed points satisfies
  \begin{equation*}
      \Fix{g, X_1 \freeunion X_2} = \Fix{g_1, X_1} \sqcup \Fix{g_2, X_2}.
  \end{equation*}
  This concludes that
  \begin{equation*}
      \chi^\Qdle(X_1 \freeunion X_2) 
      \leq \# \Fix{g, X_1 \freeunion X_2} 
      = \# \Fix{g_1, X_1} + \# \Fix{g_2,X_2} 
      = \chi^\Qdle(X_1)+\chi^\Qdle(X_2),
  \end{equation*}
  which completes the proof.
\end{proof}

The following provides an example
that does not satisfy the equality in \cref{freeunionEuler}.
We will use a quandle obtained from a weighted graph.
Note that this quandle is homogeneous,
and therefore,
the equality in \cref{freeunionEuler}
does not hold in general,
even for homogeneous quandles.

\begin{ex}\label{counterExampleFreeunion}
  Let $C_3$ be the quandle in \cref{cycleQuandle}.
  Then we have
  \[
    \chi^\Qdle(C_3 \freeunion C_3) = 0
    < 4 = \chi^\Qdle(C_3) + \chi^\Qdle(C_3).
  \]
\end{ex}

\begin{proof}
  Since we proved $\chi^\Qdle(C_3) = 2$
  in \cref{cycleQuandle},
  we have only to show that
  the Euler characteristic of 
  $C_3 \freeunion C_3$ is equal to $0$.
  Recall that $C_3$ is obtained
  from the $\ZZ/ 2\ZZ$-weighted graph
  in \cref{cycle3}.
  We have that
  $C_3 \freeunion C_3$ coincides with 
  the quandle obtained from the 
  $\ZZ/2\ZZ$-weighted graph in \cref*{c3c3},
  which is the disjoint union of two copies of 
  the graph of $C_3$.
  With respect to the labeling in \cref{c3c3},
  the corresponding adjacency matrix $D$ is given by
  \[
  D = \begin{pmatrix}
    0 & 1 & 0 & 0 & 0 & 0\\
    0 & 0 & 1 & 0 & 0 & 0\\
    1 & 0 & 0 & 0 & 0 & 0\\
    0 & 0 & 0 & 0 & 1 & 0\\
    0 & 0 & 0 & 0 & 0 & 1\\
    0 & 0 & 0 & 1 & 0 & 0
  \end{pmatrix}.
  \]
  It follows from \cref*{graphEuler} that
  \[
      (1,1,1,1,1,1)=
      (d_1 - d_2) + (d_3 - d_4) + (d_5 - d_6)
    \in \Dis{C_3 \freeunion C_3}.
  \]
  Since this element has no fixed points,
  we have $\chi^\Qdle(C_3 \freeunion C_3) = 0$
  by \cref{graphEuler} $(2)$.
\end{proof}
\begin{figure}[h]
  \centering
  \includegraphics[keepaspectratio,scale=0.6]{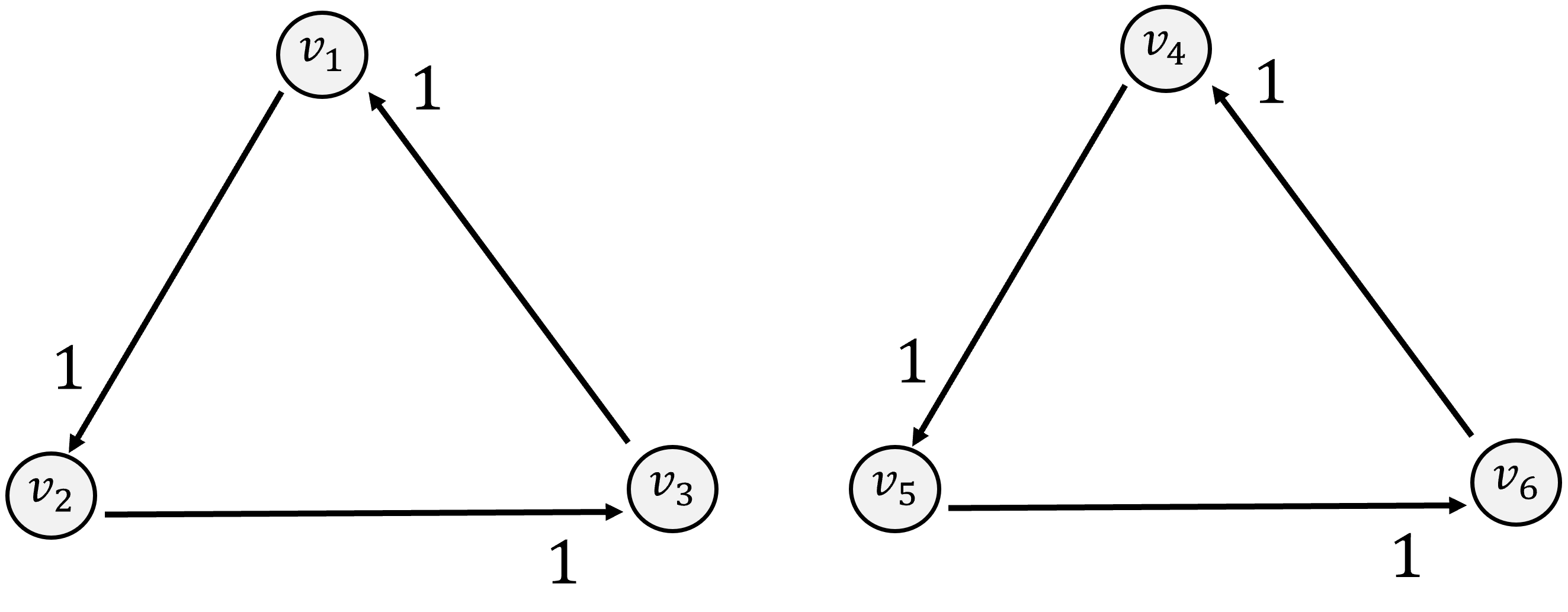}
  \caption{The $A$-weighted graph corresponding to 
  the quandle $C_3 \freeunion C_3$.}
  \label{c3c3}
\end{figure}

\section*{Acknowledgement}
The authors would like to thank 
Hirotaka Akiyoshi,
Katsunori Arai,
Seiichi Kamada,
Akira Kubo,
Fumika Mizoguchi,
Takayuki Okuda,
Makoto Sakuma,
and Yuta Taniguchi
for helpful comments
and useful discussions.
The first author 
was supported by JST SPRING, Grant Number JPMJSP2139.
The second author was supported by 
JSPS KAKENHI Grant Numbers JP22H01124 
and JP24K21193.
The authors were partly supported by 
MEXT Promotion of Distinctive Joint Research
Center Program JPMXP00723833165.

\bibliographystyle{spmpsci}
\bibliography{qdle_euler}

\vspace{-3mm}

\address{
	(R. Kai) 
	Department of Mathematics, 
	Graduate School of Science, 
	Osaka Metropolitan University, 
	3-3-138, Sugimoto, 
	Sumiyoshi-ku, Osaka, 558-8585, Japan}
\email{sw23889b@st.omu.ac.jp}


\address{
	(H. Tamaru) 
	Department of Mathematics, 
	Graduate School of Science, 
	Osaka Metropolitan University, 
	3-3-138, Sugimoto, 
	Sumiyoshi-ku, Osaka, 558-8585, Japan}

\email{tamaru@omu.ac.jp}

\end{document}